\newtheorem{thm}{Theorem}[section]
\newtheorem{cor}[thm]{Corollary}
\newtheorem{conj}[thm]{Conjecture}
\newtheorem{lem}[thm]{Lemma}
\newtheorem{prop}[thm]{Proposition}
\newtheorem{rem}[thm]{Remark}
\newtheorem{example}[thm]{Example}
\theoremstyle{remark}
\newcommand{\la}{\lambda}
\def\la{\lambda}
\definecolor{dredcolor}{rgb}{0.9,0.3,0.4}
\title{Counting self-conjugate $(s,s+1,s+2)$-core partitions}
\author{Hyunsoo Cho\\
\small Department of Mathematics\\[-0.8ex]
\small Yonsei University\\[-0.8ex] 
\small Seoul, Republic of Korea\\
\small\tt coconut@yonsei.ac.kr\\
\and
JiSun Huh\\  
\small Applied Algebra and Optimization Research Center\\[-0.8ex]
\small Sungkyunkwan University\\[-0.8ex]
\small Suwon, Republic of Korea\\
\small\tt hyunyjia@g.skku.edu\\
\and
Jaebum Sohn\thanks{Supported by the National Research Foundation of Korea (NRF) NRF-2017R1A2B4009501.}\\
\small Department of Mathematics\\[-0.8ex]
\small Yonsei University\\[-0.8ex]
\small Seoul, Republic of Korea\\
\small\tt jsohn@yonsei.ac.kr}
\begin{document}

\maketitle

\begin{abstract}
We are concerned with counting self-conjugate $(s,s+1,s+2)$-core partitions. A Motzkin path of length $n$ is a path from $(0,0)$ to $(n,0)$ which stays above the $x$-axis and consists of the up $U=(1,1)$, down $D=(1,-1)$, and flat $F=(1,0)$ steps. We say that a Motzkin path of length $n$ is symmetric if its reflection about the line $x=n/2$ is itself. In this paper, we show that the number of self-conjugate $(s,s+1,s+2)$-cores is equal to the number of symmetric Motzkin paths of length $s$, and give a closed formula for this number. 
\end{abstract}

\section{Introduction}

Let $\la=(\la_1,\la_2,\dots,\la_{\ell})$ be a partition of a positive integer $n$. The {\it Young diagram} of $\la$ is a collection of $n$ boxes in $\ell$ rows with $\la_i$ boxes in row $i$. For example, the Young diagram for $\la=(5,4,2)$ is below.

\begin{center}
\tiny{
\begin{ytableau}
~&~&~&~&~ \\
~&~&~&~ \\
~&~ 
\end{ytableau}}
\end{center}

Let the leftmost column be column 1. The box in row $i$ and column $j$ is said to be in position $(i,j)$. For the Young diagram of $\la$, the partition $\la '=(\la_1 ',\la_2 ',\dots, \la_{\la_1} ')$ is called the {\it conjugate} of $\la$, where $\la'_j$ denotes the number of boxes in column $j$. A partition whose conjugate is equal to itself is called {\it self-conjugate}.
For each box in its Young diagram, we define its {\it hook length} by counting the number of boxes directly to its right or below, including the box itself. Equivalently, for the box in position $(i,j)$, the hook length of $\la$ is defined by 
$$h(i,j)=\la_i+\la'_j-i-j+1.$$  
For a positive integer $t$, a partition $\la$ is called a {\it $t$-core} if none of its hook lengths are multiples of $t$. We use the notation of a $(t_1,...,t_p)$-core if it is simultaneously a $t_1$-core,\dots, and a $t_p$-core. See for details \cite{AL,Anderson,AHJ,GKS,JK,Johnson,Wang}.\\

For a set $S$ of positive integers, we say that $a$ is generated by $S$ if $a$ can be written as a non-negative linear combination of the elements of $S$. Let $P=P_S$ be the set of elements which are not generated by $S$, and let $(P,<_P)$ be a poset by defining the cover relation so that $a$ covers $b$ if and only if $a-b\in S$. For example, see Figure \ref{fig:p8} for the poset $P_{\{8,9,10\}}$. For the detailed explanation of poset, we refer the reader to \cite{AL,S2,YZZ}.

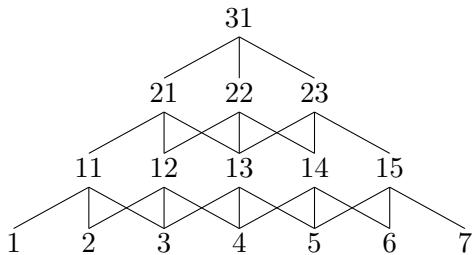
\begin{figure}[ht!]
\centering
\begin{tikzpicture}[scale=1]

\foreach \i in {1,2,3,4,5,6,7}
{\path (\i,0) coordinate (\i);
\node[above] at (\i) {\i};}

\foreach \i in {11,12,13,14,15}
{\path (-9+\i,1) coordinate (\i);
\node[above] at (\i) {\i};
\draw (\i) -- (-10+\i,0.45);
\draw (\i) -- (-9+\i,0.45);
\draw (\i) -- (-8+\i,0.45);
}

\foreach \i in {21,22,23}
{\path (-18+\i,2) coordinate (\i);
\node[above] at (\i) {\i};
\draw (\i) -- (-19+\i,1.45);
\draw (\i) -- (-18+\i,1.45);
\draw (\i) -- (-17+\i,1.45);
}

\path (4,3) coordinate (31);
\node[above] at (31) {31};
\draw (31) -- (3,2.45);
\draw (31) -- (4,2.45);
\draw (31) -- (5,2.45);
\end{tikzpicture}
\caption{The Hasse diagram of $P_{\{8,9,10\}}$}\label{fig:p8}
\end{figure}

For a poset $(P,<_P)$, a set $I\subset P$ is called a \emph{lower ideal} of $P$ if $a<_P b$ and $b\in I$ implies $a\in I$.
In \cite{Anderson}, Anderson gave a natural bijection between $t$-cores and lower ideals of a poset $P_{\{t\}}$. Moreover, she proved that for relatively prime positive integers $s$ and $t$, the number of $(s,t)$-cores has a nice closed formula by finding a bijection between $(s,t)$-cores and lattice paths from $(0,0)$ to $(s,t)$ consisting of north and east steps which stay above the diagonal.

\begin{thm}\cite{Anderson}\label{thm:anderson}
For relatively prime positive integers $s$ and $t$, the number of $(s,t)$-cores is
$$\frac{1}{s+t}\binom{s+t}{s}.$$ 
\end{thm}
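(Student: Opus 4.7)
The plan is to follow Anderson's two-step strategy: first bijectively translate $(s,t)$-cores into certain lattice paths, and then count those paths via a cyclic symmetry argument.

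For the bijection, I would encode each partition $\la$ by its set $H(\la) = \{\la_i + \ell(\la) - i : 1 \le i \le \ell(\la)\}$ of first-column hook lengths, from which $\la$ is uniquely recoverable. A standard computation, tracking how the removal of a rim hook of length $t$ acts on $H(\la)$, shows that $\la$ is a $t$-core if and only if $x \in H(\la)$ with $x > t$ forces $x - t \in H(\la)$; equivalently, $H(\la)$ is an order ideal in the poset on positive integers not divisible by $t$ under the cover relation $a \lessdot a + t$. Imposing this simultaneously for $s$ and $t$ identifies $(s,t)$-cores with order ideals of $P_{\{s,t\}}$. Since $\gcd(s,t) = 1$, every $n \in P_{\{s,t\}}$ has a unique expression $n = at - cs$ with $1 \le a \le s-1$, $1 \le c \le t-1$, and $cs < at$, which places $P_{\{s,t\}}$ in bijection with the $(s-1)(t-1)/2$ lattice cells below the segment from $(0,0)$ to $(s,t)$ in an $(s-1) \times (t-1)$ rectangle. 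The poset covers $n \lessdot n+s$ and $n \lessdot n+t$ become the cell adjacencies $(a,c) \lessdot (a, c-1)$ and $(a,c) \lessdot (a+1, c)$, so an order ideal is exactly the region cut off by a monotone east/north lattice path from $(0,0)$ to $(s,t)$ staying weakly above the diagonal $y = tx/s$.

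To count such ``rational Catalan paths,'' I would invoke the Dvoretzky--Motzkin cycle lemma. The cyclic group of order $s+t$ acts on the step sequences of all $\binom{s+t}{s}$ east/north paths from $(0,0)$ to $(s,t)$ by rotation. Because $\gcd(s,t) = 1$, the balance function $b_k = y_k s - x_k t$ (with $x_k, y_k$ the numbers of east and north steps used after $k$ steps) takes distinct values at distinct positions within one period, so the action is free and each orbit has exactly $s+t$ elements; moreover, exactly one representative in each orbit---the rotation starting immediately after the unique minimum of $b_k$---yields a path staying weakly above the diagonal. Hence the number of valid paths, and therefore of $(s,t)$-cores, is $\binom{s+t}{s}/(s+t)$.

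The step I expect to be most delicate is the first: identifying $(s,t)$-cores with order ideals of $P_{\{s,t\}}$ requires both the beta-set characterization of $t$-cores and the correct parametrization of $P_{\{s,t\}}$ by cells of the rectangle, and then matching the abstract poset order with the geometry of monotone paths. Once the geometric picture is in place, the cycle-lemma count is short and routine.
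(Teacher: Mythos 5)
The paper states this result as a citation of Anderson without reproducing the proof, but your argument is precisely the one the paper attributes to her: encode an $(s,t)$-core by its first-column hook lengths, identify these with lower ideals of $P_{\{s,t\}}$ and hence with north/east lattice paths from $(0,0)$ to $(s,t)$ staying above the diagonal, and count those paths (your cycle-lemma step is the standard way to finish). The proposal is correct and takes essentially the same approach.
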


Since the work of Anderson, the topic counting simultaneous cores has received growing attention. In \cite{FMS}, Ford, Mai, and Sze proved the following analog of Anderson's work.

\begin{thm}\cite{FMS}\label{thm:FMS}
For relatively prime positive integers $s$ and $t$, the number of self-conjugate $(s,t)$-cores is
$$\binom{\lfloor \frac{s}{2}\rfloor+\lfloor \frac{t}{2}\rfloor}{\lfloor \frac{s}{2}\rfloor}.$$ 
\end{thm}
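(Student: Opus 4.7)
The plan is to leverage Anderson's bijection (Theorem~\ref{thm:anderson}) between $(s,t)$-cores and lattice paths from $(0,0)$ to $(s,t)$ using east steps $E=(1,0)$ and north steps $N=(0,1)$ that stay weakly above the line segment from $(0,0)$ to $(s,t)$. My first task is to describe how the conjugation involution $\lambda\mapsto\lambda'$ translates to the lattice path side. Since conjugating $\lambda$ reflects its Young diagram across the main diagonal, interchanging rows and columns, and Anderson's bijection is built from the boundary of the Young diagram inscribed in the $s\times t$ rectangle, I expect conjugation to correspond to the $180^{\circ}$ rotation of the path about the center $(s/2,t/2)$ of the rectangle (equivalently, swapping each $E$ with $N$ and reading the step sequence in reverse). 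Self-conjugate $(s,t)$-cores then correspond precisely to the lattice paths fixed by this involution.

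Next, I would count the symmetric paths by encoding each of them through its initial portion up to the center. Since $\gcd(s,t)=1$, at most one of $s,t$ is even, and the parities of $s$ and $t$ dictate how the path crosses the center. In each parity case, the portion of the path from $(0,0)$ up to the center extends to a lattice path from $(0,0)$ to $(\lfloor s/2\rfloor,\lfloor t/2\rfloor)$, and the full symmetric path is recovered from this initial half by applying the $180^{\circ}$ rotation. I would then check that this association is a bijection between symmetric Anderson paths and \emph{unrestricted} lattice paths from $(0,0)$ to $(\lfloor s/2\rfloor,\lfloor t/2\rfloor)$, the number of which is exactly $\binom{\lfloor s/2\rfloor+\lfloor t/2\rfloor}{\lfloor s/2\rfloor}$.

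A key subtlety is verifying that Anderson's ``above the diagonal'' constraint becomes vacuous once symmetry is imposed: the initial half of the path lies in a sub-rectangle of size roughly $\lceil s/2\rceil\times\lceil t/2\rceil$, which itself sits entirely above the main diagonal of the full $s\times t$ box, so the constraint places no additional restriction on the half. The hardest step, I expect, is executing the parity case analysis cleanly---specifically, identifying which unit step of the symmetric path contains the center $(s/2,t/2)$ and verifying that the half-path truly ranges bijectively over all unrestricted lattice paths to $(\lfloor s/2\rfloor,\lfloor t/2\rfloor)$ with no lost or double-counted cases. A secondary obstacle is establishing rigorously that conjugation matches the claimed $180^{\circ}$ symmetry of the Anderson path; this entails unwinding the construction from the $\beta$-set viewpoint and checking that the complementation of hook data induced by conjugation is visible as the stated rotation of the lattice path.
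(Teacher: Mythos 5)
This is a cited theorem of Ford--Mai--Sze, so the paper offers no proof to compare against; judging your proposal on its own terms, it breaks at the very first step. Under Anderson's bijection, conjugation does \emph{not} correspond to the $180^{\circ}$ rotation of the path about the center of the $s\times t$ rectangle. A $180^{\circ}$ rotation acts on the step word $w_1w_2\cdots w_{s+t}$ simply by reversal, so its fixed points are the palindromic words. Take $(s,t)=(3,4)$: the five lattice paths from $(0,0)$ to $(3,4)$ weakly above the diagonal are $NNENENE$, $NNENNEE$, $NNNEENE$, $NNNENEE$, $NNNNEEE$, and none of these is a palindrome, so your involution on paths has $0$ fixed points; but the five $(3,4)$-cores are $\emptyset,(1),(2),(1,1),(3,1,1)$, of which three are self-conjugate, matching $\binom{1+2}{1}=3$. (Already for $(s,t)=(2,3)$ you get $0$ palindromic paths versus $2$ self-conjugate cores.) Since no bijection between two $5$-element sets can intertwine an involution with $3$ fixed points with one having $0$ fixed points, the claimed correspondence is impossible, not merely unverified. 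Your parenthetical alternative, ``swap each $E$ with $N$ and reverse,'' is the reflection in the anti-diagonal; for $s\neq t$ it sends paths in the $s\times t$ rectangle to paths in the $t\times s$ rectangle and so is not even an involution on your set of paths. It only becomes one in the nearly square case $t=s+1$, which is precisely why the symmetric-Dyck-path picture of Remark~\ref{rem:symmetric} works there but does not generalize. Consequently the half-path count and the claim that the diagonal constraint becomes vacuous are moot.

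The route Ford--Mai--Sze actually take (and the one this paper builds on) bypasses Anderson's path entirely: a self-conjugate partition is determined by its set $MD(\lambda)$ of main-diagonal hook lengths, which can be an arbitrary finite set of distinct odd positive integers, and Proposition~\ref{prop:FMS} translates the $(s,t)$-core condition into closure and forbidden-pair conditions on $MD(\lambda)$. From those conditions one constructs a direct bijection between the admissible sets $MD(\lambda)$ and \emph{unconstrained} monotone lattice paths in a $\lfloor s/2\rfloor\times\lfloor t/2\rfloor$ grid, which is where $\binom{\lfloor s/2\rfloor+\lfloor t/2\rfloor}{\lfloor s/2\rfloor}$ comes from. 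If you want to rescue a symmetrization argument, you must first compute the involution that conjugation actually induces on the beta-set/abacus (it is a complementation of the first-column hook lengths, not a rigid motion of the boundary path), and carrying that out is essentially equivalent to proving Proposition~\ref{prop:FMS}.
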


An \emph{$(s,k)$-generalized Dyck path} is a path from $(0,0)$ to $(s,s)$ which stays above the diagonal and consists of the steps $N_k=(0,k)$, $E_k=(k,0)$, and $D_i=(i,i)$ for $1\leq i \leq k-1$. For example, an $(s,1)$-generalized Dyck path is a (classical) \emph{Dyck path of order} $s$. We say that an $(s,k)$-generalized Dyck path is \emph{symmetric} if its reflection about the line $y=s-x$ is itself. It is often observed that counting the number of simultaneous cores can sometimes be described as counting the number of different paths.

\begin{rem} \label{rem:symmetric} Let $s$ be a positive integer.
\begin{enumerate}
\item[1.] The number of $(s,s+1)$-cores is the $s$th Catalan number $C_s=\frac{1}{s+1}\binom{2s}{s}$ which counts the number of Dyck paths of order $s$.
\item[2.] The number of self-conjugate $(s,s+1)$-cores is $\binom{s}{\lfloor s/2 \rfloor}$ which counts the number of \emph{symmetric Dyck paths} of order $s$.
\end{enumerate}
\end{rem}

In \cite{AL}, Amdeberhan and Leven expand Anderson's result to $(s,s+1,\dots,s+k)$-cores.

\begin{thm}\cite{AL}\label{thm:AL} 
The followings are equinumerous:
\begin{enumerate}
\item[(a)] The number of $(s,s+1,\dots,s+k)$-cores.
\item[(b)] The number of $(s,k)$-generalized Dyck paths.
\item[(c)] The number of lower ideals in $P_{\{s,s+1,\dots,s+k\}}$.
\end{enumerate} 
\end{thm}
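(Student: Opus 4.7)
The strategy is to prove the three-way equivalence by establishing two bijections: (a)$\leftrightarrow$(c) via an extension of Anderson's original bijection, and (b)$\leftrightarrow$(c) via a staircase-reading of lower ideals.

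For (a)$\leftrightarrow$(c), the plan is to associate to each partition $\la$ its first-column hook set $B(\la) = \{h(i,1) : 1 \le i \le \ell(\la)\}$. For a single modulus $t$, Anderson proved that $\la$ is a $t$-core iff $B(\la)$ is a lower ideal of $P_{\{t\}}$. Extending to several moduli at once, $\la$ is a $(s,s+1,\dots,s+k)$-core iff $B(\la)$ is closed under subtracting any $t \in \{s,\dots,s+k\}$ whenever the result remains positive, and this is precisely the lower-ideal condition in $P_{\{s,s+1,\dots,s+k\}}$. The point to check is the containment $B(\la) \subseteq P_{\{s,\dots,s+k\}}$: if some $h \in B(\la)$ were expressible as a non-negative integer combination of the generators, iterated subtraction (permitted by the various $t$-core conditions) would eventually produce some generator $s+j$ itself sitting in $B(\la)$, contradicting the $(s+j)$-core property. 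Inverting then reconstructs a partition from any lower ideal.

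For (b)$\leftrightarrow$(c), the plan is to exploit the layered structure of $P_{\{s,s+1,\dots,s+k\}}$. Row $j$ consists of the gaps in $[js,(j+1)s)$, namely $\{js + jk + 1,\dots,(j+1)s - 1\}$, of size $s-1-jk$ whenever this is positive; the cover relations between successive rows exhibit a uniform pattern in which each element covers up to $k+1$ elements one row below. A lower ideal is determined by a left-justified selection within each row, compatible with the covers. Encode these selections as a sequence of steps: transitions that drop $k$ entries contribute an $E_k$ step, those that add $k$ entries contribute an $N_k$ step, and intermediate transitions contribute $D_i$ for the appropriate $i$. One then verifies that the resulting path begins at $(0,0)$, ends at $(s,s)$, and stays above the diagonal (this last being forced by the downward closure of the ideal), and that the construction is reversible.

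The main obstacle will be step (b)$\leftrightarrow$(c): correctly encoding the covers of the staircase poset into step types of a generalized Dyck path, and matching the above-diagonal condition with the lower-ideal property, require careful bookkeeping, especially because the rows shrink by $k$ at each level and the step set $\{N_k, E_k, D_1, \dots, D_{k-1}\}$ must exactly exhaust the possible transition patterns. Establishing (a)$\leftrightarrow$(c) is by comparison a matter of tracking how Anderson's single-modulus correspondence interacts with the intersection of conditions over several moduli.
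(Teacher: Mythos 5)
The paper offers no proof of this statement --- it is quoted verbatim from Amdeberhan and Leven \cite{AL} --- so your attempt can only be measured against the standard arguments there. Your (a)$\leftrightarrow$(c) step is essentially that standard argument and is sound: the set of first-column hook lengths is a lower ideal of $P_{\{s,\dots,s+k\}}$ exactly when $\la$ is a simultaneous core, and your iterated-subtraction argument for the containment $B(\la)\subseteq P_{\{s,\dots,s+k\}}$ (a representable element of $B(\la)$ would force some generator $s+j$ into $B(\la)$, i.e.\ a hook of length $s+j$) is the right way to close that gap.

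The (b)$\leftrightarrow$(c) step, however, rests on a false claim. A lower ideal of $P_{\{s,s+1,\dots,s+k\}}$ is \emph{not} determined by a left-justified selection in each row: the $s-1$ elements of row $0$ are pairwise incomparable minimal elements, so every one of the $2^{s-1}$ subsets of row $0$ is itself a lower ideal (e.g.\ $\{3\}$, $\{5\}$ and $\{1,3\}$ are distinct lower ideals of $P_{\{8,9,10\}}$, none of them left-justified), whereas a left-justified scheme allows only $s$ choices in that row. An encoding that records only how the row sizes change between consecutive levels therefore collapses many distinct ideals to the same step sequence and cannot be a bijection; the step count also fails to match, since there are only about $s/k$ row transitions while an $(s,k)$-generalized Dyck path from $(0,0)$ to $(s,s)$ has at least on the order of $2s/k$ steps (for $k=1$: $s-1$ rows versus $2s$ steps). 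What the path must record is the position of the ideal's boundary \emph{within} each row, not merely the number of selected elements: writing the element $js+jk+m$ of row $j$ as $(j,m)$, one has $(j',m')\geq (j,m)$ iff $m'\leq m\leq m'+(j'-j)k$, so the poset embeds in the plane as a staircase of cells and a lower ideal is exactly the region cut out by a suitable lattice path along its upper boundary; for $k=1$ this is the classical bijection between order ideals of the type-$A$ root poset and Dyck paths, and the general case is the content of the bijection in \cite{AL}. As written, your proposal identifies the right target but the proposed encoding would not prove it.
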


We note that $(s,2)$-generalized Dyck paths are equivalent to Motzkin paths of length $s$. From Theorem \ref{thm:AL}, one can obtain the following corollary.

\begin{cor} \label{cor:Motzkin}
For a positive integer $s$, the number of $(s,s+1,s+2)$-cores is 
$$M_s=\sum_{i=0}\frac{1}{i+1}\binom{s}{2i} \binom{2i}{i},$$
the $s$th Motzkin number which counts the number of Motzkin paths of length $s$.
\end{cor}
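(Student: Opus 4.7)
The plan is to combine Theorem \ref{thm:AL} with an explicit bijection between $(s,2)$-generalized Dyck paths and Motzkin paths of length $s$, and then invoke the standard enumeration of the latter by the $s$th Motzkin number.

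First, applying Theorem \ref{thm:AL} with $k=2$, the number of $(s,s+1,s+2)$-cores equals the number of $(s,2)$-generalized Dyck paths, that is, paths from $(0,0)$ to $(s,s)$ staying weakly above the diagonal $y=x$ using steps $N_2=(0,2)$, $E_2=(2,0)$, and $D_1=(1,1)$.

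Next, I would set up the map $\varphi$ that, reading the path step by step, replaces each $N_2$ by an up step $U$, each $E_2$ by a down step $D$, and each $D_1$ by a flat step $F$. If the original path contains $a$ copies of $N_2$, $b$ copies of $E_2$, and $c$ copies of $D_1$, the endpoint conditions give $2a+c=s$ and $2b+c=s$, so $a=b$ and $\varphi$ produces a word of length $a+b+c=2a+c=s$. After any prefix, the resulting Motzkin height equals $(y-x)/2$ for the corresponding prefix endpoint $(x,y)$, since $N_2$ and $E_2$ change $y-x$ by $\pm 2$ while $D_1$ leaves it unchanged. Hence the diagonal condition translates exactly into non-negativity of the Motzkin height, the endpoint $(s,s)$ becomes height $0$, and $\varphi$ is evidently invertible.

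Finally, to obtain the closed formula, I would group Motzkin paths of length $s$ by the number $i$ of up steps, which necessarily equals the number of down steps. Choosing the $2i$ positions among the $s$ steps occupied by non-flat steps contributes $\binom{s}{2i}$, while arranging $i$ up and $i$ down steps at these positions so that the height stays non-negative is counted by the Catalan number $C_i=\frac{1}{i+1}\binom{2i}{i}$. Summing over $i$ gives the stated formula for $M_s$. The argument has no real obstacle, as the paper already flags that $(s,2)$-generalized Dyck paths are Motzkin paths in disguise; the only thing to formalize is the height-to-height correspondence above.
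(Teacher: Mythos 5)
Your proposal is correct and follows essentially the same route as the paper, which simply notes that $(s,2)$-generalized Dyck paths are equivalent to Motzkin paths of length $s$ and then invokes Theorem \ref{thm:AL}; you merely make the step-replacement bijection and the height correspondence explicit. The closing derivation of the closed formula by conditioning on the number $i$ of up steps is the standard count and matches the stated expression.
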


We note that Yang, Zhong, and Zhou \cite{YZZ} proved Corollary \ref{cor:Motzkin} independently.\\

It is natural to ask whether the number of self-conjugate $(s,s+1,s+2)$-cores and the number of symmetric Motzkin paths of length $s$ are equinumerous from Remark \ref{rem:symmetric} and Corollary \ref{cor:Motzkin}. In this paper, we prove that these two quantities are equal by showing that they satisfy the same recurrence relation which will be proved in Seciton \ref{sec:counting}. Furthermore, we give a closed formula for these numbers.


\section{Poset structure for self-conjugate $(s,s+1,s+2)$-cores}\label{sec:poset}

In this section, we construct a poset whose lower ideals with some restrictions are corresponding to self-conjugate $(s,s+1,s+2)$-cores, and then give a simple diagram to visualize that poset.\\

For a partition $\la$, let $MD(\la)$ denote the set of main diagonal hook lengths. Therefore, $MD(\la)$ is a set of distinct odds when $\la$ is self-conjugate. In \cite{FMS}, authors gave a useful result for determining self-conjugate $t$-cores. 

\begin{prop}\cite{FMS}\label{prop:FMS}
Let $\la$ be a self-conjugate partition. Then $\la$ is a $t$-core partition if and only if both of the following hold:
\begin{enumerate}
\item[(a)] If $h\in MD(\la)$ with $h>2t$, then $h-2t\in MD(\la)$.
\item[(b)] If $h_1,h_2\in MD(\la)$, then $h_1+h_2\not\equiv 0 \pmod{2t}$. 
\end{enumerate}
\end{prop}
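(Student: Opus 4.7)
The plan is to encode $\la$ by its Maya diagram (particle set) and translate both sides of the equivalence into that language. Write $P = \{\la_i - i + \tfrac{1}{2} : i \ge 1\} \subset \mathbb{Z} + \tfrac{1}{2}$ for the particle positions of $\la$. Two standard facts suffice: $\la$ is a $t$-core iff $P$ is closed under $x \mapsto x - t$ whenever $x-t$ is a legal position, and $\la$ is self-conjugate iff for every $x \in \mathbb{Z} + \tfrac{1}{2}$ exactly one of $x$ and $-x$ lies in $P$. Under this dictionary the positive part $P^+ := P \cap \mathbb{R}_{>0}$ equals $\{x_i = \la_i - i + \tfrac{1}{2} : 1 \le i \le d\}$, and since $h(i,i) = 2(\la_i - i) + 1 = 2 x_i$ we have $MD(\la) = 2 P^+$.

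Next I would read off what the $t$-core closure says purely in terms of $P^+$. Applied to a positive particle $x$, the relation ``$x - t \in P$'' splits into two cases: if $x > t$ it reads $x - t \in P^+$, while if $x < t$ it reads ``$t - x$ is a positive hole'', i.e.\ $t - x \notin P^+$. Applied to a negative particle, self-conjugacy converts the relation back into a statement about $P^+$ that, after renaming, coincides with the first case. Hence being a self-conjugate $t$-core is equivalent to the conjunction of (C1) ``every $x \in P^+$ with $x > t$ satisfies $x - t \in P^+$'' and (C2) ``no $x, y \in P^+$, possibly with $x = y$, satisfy $x + y = t$''. Doubling, (C1) is precisely statement (a), and (C2) is the $h_i + h_j = 2t$ case of (b).

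It then remains to verify (a)+(b) is equivalent to (a)+(C2). One direction is immediate since (b) strictly contains (C2). For the converse, assume (a) and (C2), and suppose $h_i + h_j = 2kt$ with $h_i, h_j \in MD(\la)$ for some $k \ge 1$. If $k \ge 2$, take $h_i \ge h_j$; then $h_i \ge kt \ge 2t$, and since $h_i$ is odd while $2t$ is even, we in fact have $h_i > 2t$. Condition (a) then places $h_i - 2t \in MD(\la)$, and the pair $(h_i - 2t, h_j)$ sums to $2(k-1)t$. Induction on $k$ forces $k = 1$, which is ruled out by (C2).

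The only genuinely delicate step is the Maya-diagram translation: one must verify carefully that the $t$-core constraints from the negative particles produce no conditions beyond (C1) and (C2), rather than generating extra ones. Once that bookkeeping is in hand, the closing induction is short and the rest is formal.
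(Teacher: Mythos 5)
This proposition is quoted from \cite{FMS} and the paper supplies no proof of its own, so there is nothing internal to compare against; judged on its own terms, your argument is correct and complete. The dictionary you set up is the standard one: with $P=\{\la_i-i+\tfrac12\}$ one has $MD(\la)=2P^+$ for self-conjugate $\la$, the $t$-core condition is closure of $P$ under $x\mapsto x-t$, and self-conjugacy is the condition that exactly one of $x,-x$ lies in $P$. The bookkeeping point you flag as delicate does check out: for a negative particle $x$, the requirement $x-t\in P$ translates (via $x\in P\Leftrightarrow -x\notin P^+$ and $x-t\in P\Leftrightarrow t-x\notin P^+$) into ``$y\in P^+$ and $y>t$ imply $y-t\in P^+$'' after setting $y=t-x$, which is exactly (C1) again, so the negative particles contribute nothing new. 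The positive particles with $x<t$ give (C2), including the degenerate case $x=t/2$ since that case forces $x\notin P^+$ directly. Finally, the descent from (a)$+$(C2) to (b) is sound: if $h_1+h_2=2kt$ with $k\ge 2$ and $h_1\ge h_2$, then $h_1\ge kt\ge 2t$ and oddness of $h_1$ forces $h_1>2t$, so (a) yields $h_1-2t\in MD(\la)$ with $(h_1-2t)+h_2=2(k-1)t$, and induction on $k$ reduces to the $k=1$ case excluded by (C2). This is essentially the argument of Ford--Mai--Sze themselves (who work with the abacus/extended diagonal in the same way), so your proof is a faithful reconstruction rather than a new route, but it is a valid, self-contained one.
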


For a positive integer $s$, we consider an induced subposet of $P=P_{\{2s,2s+1,\dots,2s+4\}}$,
$$\tilde{P}_{\{s,s+1,s+2\}}=\{h\in P~:~s\not<_P h,~s+1\not<_P h,~s+2\not<_P h, \text{~and~} h \text{~is odd} \}.$$
We note that the poset $\tilde{P}_{\{s,s+1,s+2\}}$ is the disjoint union of two posets, say $Q$ and $R$, where $Q$ is the maximal induced subposet of $P$ of which minimal elements are odd integers less than $s$, and $R$ is the maximal induced subposet of $P$ of which minimal elements are odd integers $x$ such that $s+2<x<2s$. See Figures \ref{fig:tilde8} and \ref{fig:tilde9} for example.

\begin{figure}[ht!]
\centering
\begin{tikzpicture}[scale=0.77]

\foreach \i in {1,3,5,7,11,13,15}
{\path (\i,0) coordinate (\i);
\node[above] at (\i) {\i};}

\foreach \i in {2,4,6,8,9,10,12,14}
{\path (\i,0) coordinate (\i);
\node[above, gray!60] at (\i) {\i};}

\foreach \i in {22,24,25,26,27,28,29,30}
{\path (-18+\i,1.5) coordinate (\i);
\node[above, gray!60] at (\i) {\i};
\draw[gray!60] (\i) -- (-16+\i,0.55);
\draw[gray!60] (\i) -- (-17+\i,0.55);
\draw[gray!60] (\i) -- (-18+\i,0.55);
\draw[gray!60] (\i) -- (-19+\i,0.55);
\draw[gray!60] (\i) -- (-20+\i,0.55);
}

\foreach \i in {21,23,31}
{\path (-18+\i,1.5) coordinate (\i);
\node[above] at (\i) {\i};
\draw[thick] (\i) -- (-16+\i,0.55);
\draw[gray!60] (\i) -- (-17+\i,0.55);
\draw[thick] (\i) -- (-18+\i,0.55);
\draw[gray!60] (\i) -- (-19+\i,0.55);
\draw (\i) -- (-20+\i,0.55);
}

\foreach \i in {41,42,43,44,45,46,47}
{\path (-36+\i,3) coordinate (\i);
\node[above, gray!60] at (\i) {\i};
\draw[gray!60] (\i) -- (-34+\i,2.05);
\draw[gray!60] (\i) -- (-35+\i,2.05);
\draw[gray!60] (\i) -- (-36+\i,2.05);
\draw[gray!60] (\i) -- (-37+\i,2.05);
\draw[gray!60] (\i) -- (-38+\i,2.05);
}

\foreach \i in {61,62,63}
{\path (-54+\i,4.5) coordinate (\i);
\node[above, gray!60] at (\i) {\i};
\draw[gray!60] (\i) -- (-52+\i,3.55);
\draw[gray!60] (\i) -- (-53+\i,3.55);
\draw[gray!60] (\i) -- (-54+\i,3.55);
\draw[gray!60] (\i) -- (-55+\i,3.55);
\draw[gray!60] (\i) -- (-56+\i,3.55);
}

\end{tikzpicture}
\caption{The Hasse diagram of the induced subposet $\tilde{P}_{\{8,9,10\}}$ of $P_{\{16,17,18,19,20\}}$}\label{fig:tilde8}
\end{figure}
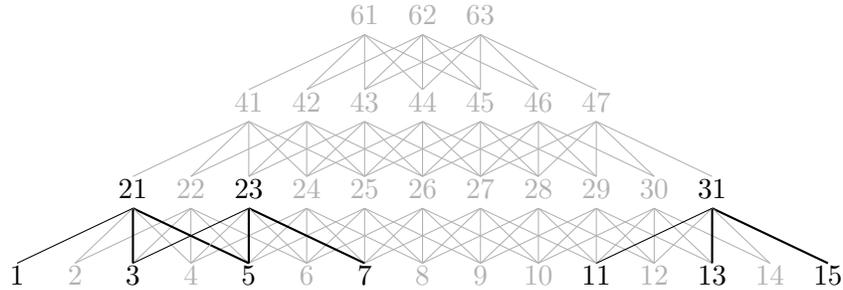

\begin{figure}[ht!]
\centering
\begin{tikzpicture}[scale=0.77]

\foreach \i in {1,3,5,7,13,15,17}
{\path (\i,0) coordinate (\i);
\node[above] at (\i) {\i};}

\foreach \i in {2,4,6,8,9,10,11,12,14,16}
{\path (\i,0) coordinate (\i);
\node[above, gray!60] at (\i) {\i};}

\foreach \i in {24,26,27,28,29,30,31,32,33,34}
{\path (-20+\i,1.5) coordinate (\i);
\node[above, gray!60] at (\i) {\i};
\draw[gray!60] (\i) -- (-18+\i,0.55);
\draw[gray!60] (\i) -- (-19+\i,0.55);
\draw[gray!60] (\i) -- (-20+\i,0.55);
\draw[gray!60] (\i) -- (-21+\i,0.55);
\draw[gray!60] (\i) -- (-22+\i,0.55);
}

\foreach \i in {23,25,35}
{\path (-20+\i,1.5) coordinate (\i);
\node[above] at (\i) {\i};
\draw[thick] (\i) -- (-18+\i,0.55);
\draw[gray!60] (\i) -- (-19+\i,0.55);
\draw[thick] (\i) -- (-20+\i,0.55);
\draw[gray!60] (\i) -- (-21+\i,0.55);
\draw (\i) -- (-22+\i,0.55);
}

\foreach \i in {45,46,47,48,49,50,51,52,53}
{\path (-40+\i,3) coordinate (\i);
\node[above, gray!60] at (\i) {\i};
\draw[gray!60] (\i) -- (-38+\i,2.05);
\draw[gray!60] (\i) -- (-39+\i,2.05);
\draw[gray!60] (\i) -- (-40+\i,2.05);
\draw[gray!60] (\i) -- (-41+\i,2.05);
\draw[gray!60] (\i) -- (-42+\i,2.05);
}

\foreach \i in {67,68,69,70,71}
{\path (-60+\i,4.5) coordinate (\i);
\node[above, gray!60] at (\i) {\i};
\draw[gray!60] (\i) -- (-58+\i,3.55);
\draw[gray!60] (\i) -- (-59+\i,3.55);
\draw[gray!60] (\i) -- (-60+\i,3.55);
\draw[gray!60] (\i) -- (-61+\i,3.55);
\draw[gray!60] (\i) -- (-62+\i,3.55);
}

\end{tikzpicture}
\caption{The Hasse diagram of the induced subposet $\tilde{P}_{\{9,10,11\}}$ of $P_{\{18,19,20,21,22\}}$}\label{fig:tilde9}
\end{figure}
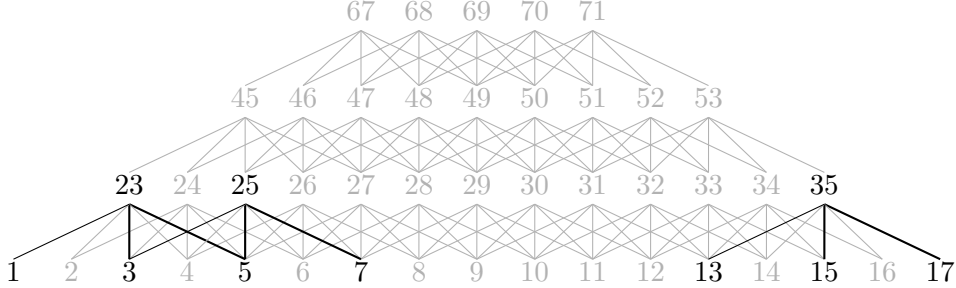

Now, we restate Proposition \ref{prop:FMS} by using the poset we constructed.

\begin{prop}\label{prop:lower}
Let $\la$ be a self-conjugate partition. Then $\la$ is an $(s,s+1,s+2)$-core partition if and only if the set $MD(\la)$ is a lower ideal of $\tilde{P}_{\{s,s+1,s+2\}}$ with no elements $h_1,h_2$ such that $h_1+h_2\in \{2s,2s+2,2s+4\}$.
\end{prop}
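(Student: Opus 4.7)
The plan is to prove both directions by translating the conditions of Proposition~\ref{prop:FMS}, specialized to $t\in\{s,s+1,s+2\}$, into the poset language of $\tilde P_{\{s,s+1,s+2\}}$. The dictionary is this: since $S=\{2s,\dots,2s+4\}$, the even elements $\{2s,2s+2,2s+4\}$ are exactly the possible values of $2t$ in Proposition~\ref{prop:FMS}(a) acting on odd hook lengths, while Proposition~\ref{prop:FMS}(b) at $k=1$ is exactly the forbidden-sum condition in our statement. A tool I use throughout is the list of identities $(2s+1)+(2s+1)=(2s)+(2s+2)$, $(2s+1)+(2s+3)=(2s)+(2s+4)$, and $(2s+3)+(2s+3)=(2s+2)+(2s+4)$, which trade any pair of odd summands from $S$ for even ones; in particular, every even positive integer generated by $S$ is generated by $\{2s,2s+2,2s+4\}$.

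For the forward direction, I assume $\la$ is a self-conjugate $(s,s+1,s+2)$-core and fix $h\in MD(\la)$; I must show $h\in\tilde P$, that $MD(\la)$ is lower-closed in $\tilde P$, and that no two elements sum into $\{2s,2s+2,2s+4\}$. The last is immediate from Proposition~\ref{prop:FMS}(b) with $k=1$. For $h\in\tilde P$: oddness and positivity are built in, and the core property rules out $h\in\{s,s+1,s+2\}$. To show $h$ is not generated by $S$, I take a minimal counterexample: if $h\in\{2s+1,2s+3\}$, one application of Proposition~\ref{prop:FMS}(a) places $1\in MD(\la)$ and then $(1,h)$ sums to $2(s+1)$ or $2(s+2)$, violating Proposition~\ref{prop:FMS}(b); otherwise the identities above rewrite $h$ with an even summand $2t$, and (a) yields a strictly smaller element of $MD(\la)$ generated by $S$, contradicting minimality. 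To rule out $s,\,s+1,$ or $s+2$ being below $h$, I write $h=k+r$ with $k\in\{s,s+1,s+2\}$ and $r\neq 0$ generated by $S$; cascading down from $h$ through $\{2s,2s+2,2s+4\}$-steps via (a) reaches either $k$ or $k+(2s+j_0)$ for some $j_0\in\{1,3\}$, and one further step of $2s$, $2(s+1)$, or $2(s+2)$ (selected by parities) lands on an element of $\{s,s+1,s+2\}$, contradicting the core property. The lower-ideal condition follows easily: for $a\in MD(\la),\, b\in\tilde P$ with $b<_P a$, the difference $a-b$ is a positive even integer generated by $S$, hence generated by $\{2s,2s+2,2s+4\}$, so iterating (a) takes $a$ down to $b$ through $MD(\la)$.

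For the backward direction, I assume $MD(\la)$ is a lower ideal of $\tilde P$ with no pair summing into $\{2s,2s+2,2s+4\}$ and verify Proposition~\ref{prop:FMS}(a) and (b) for each $t\in\{s,s+1,s+2\}$. For (a), I fix $h\in MD(\la)$ with $h>2t$ and show $h-2t\in\tilde P$: it is odd and positive; it cannot be generated by $S$ (else $h=2t+(h-2t)$ is, contradicting $h\in\tilde P$); and it cannot satisfy $k\leq_P h-2t$ for any $k\in\{s,s+1,s+2\}$ (else $k<_P k+2t\leq_P h$, again contradicting $h\in\tilde P$). Lower-ideal closure then gives $h-2t\in MD(\la)$. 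For (b), I induct on $k$ in $h_1+h_2=2kt$: the hypothesis handles $k=1$, and for $k\geq 2$ the larger summand is at least $kt\geq 2t$ and is odd, hence strictly exceeds $2t$, so the just-verified (a) produces a $2t$-smaller element of $MD(\la)$, reducing the sum to $2(k-1)t$ and closing the induction.

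I expect the main obstacle to be the two sub-steps of the forward direction that establish $MD(\la)\subseteq\tilde P$, namely ruling out that $h$ is generated by $S$ and that $h$ is strictly above some $k\in\{s,s+1,s+2\}$ in $P$. Both require combining (a) and (b) of Proposition~\ref{prop:FMS} with the re-pairing identities, and a careful choice of the final step of the (a)-cascade so that one always lands on an honest multiple of $s$, $s+1$, or $s+2$; the remaining parts (the lower-ideal verification and the induction in (b)) are then essentially routine.
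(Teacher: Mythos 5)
The paper gives no proof of Proposition~\ref{prop:lower} at all --- it is presented as a direct ``restatement'' of Proposition~\ref{prop:FMS} in the poset language --- so your write-up is exactly the verification the authors omit, and it is correct. The two genuinely nontrivial points are the ones you isolate: (i) in the forward direction, that $MD(\la)\subseteq\tilde P_{\{s,s+1,s+2\}}$, which you get from a minimal-counterexample/cascade argument using the re-pairing identities such as $(2s+1)+(2s+1)=(2s)+(2s+2)$ to reduce everything to subtractions of $2s$, $2s+2$, $2s+4$ that Proposition~\ref{prop:FMS}(a) can perform; and (ii) in the backward direction, that the single forbidden-sum condition $h_1+h_2\in\{2s,2s+2,2s+4\}$ already implies $h_1+h_2\not\equiv 0\pmod{2t}$ for all higher multiples, via the descent furnished by (a). Both arguments check out, including the parity bookkeeping in the final step of the cascade that lands on an honest element of $\{s,s+1,s+2\}$. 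Two small conventions are worth making explicit to match the paper's figures: the condition ``no elements $h_1,h_2$ with $h_1+h_2\in\{2s,2s+2,2s+4\}$'' must allow $h_1=h_2$ (otherwise the odd element of $\{s,s+1,s+2\}$ could sneak into a lower ideal), and correspondingly $s\not<_P h$ in the definition of $\tilde P$ should be read as $s\not\le_P h$; with either reading your argument goes through, but you should say which one you are using.
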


\begin{example}  For a self-conjugate $(8,9,10)$-core partition $\la=(6,3,3,1,1,1)$, the set $MD(\la)=\{11,3,1\}$ of main diagonal hook lengths is a lower ideal of $\tilde{P}_{\{8,9,10\}}$ with no elements $h_1,h_2$ such that $h_1+h_2\in\{16,18,20\}$.
\end{example} 

For convenience, we add dotted edges connecting elements $h_1$, $h_2$ in the Hasse diagram of $\tilde{P}_{\{s,s+1,s+2\}}$ with $h_1+h_2\in \{2s,2s+2,2s+4\}$ so that at most one end point of each dotted edge can be selected for the lower ideal corresponding to an $(s,s+1,s+2)$-core. From now on, we use the \emph{modified diagram} for $\tilde{P}_{\{s,s+1,s+2\}}$ as in Figure \ref{fig:modified8}.

\begin{figure}[ht!]
\centering
\begin{tikzpicture}[scale=0.6]

\foreach \i in {1,3,5,7}
{\path (\i,0) coordinate (\i);
\node[above] at (\i) {\i};}

\foreach \i in {11,13,15}
{\path (17-\i,-1.5) coordinate (\i);
\node[above] at (\i) {\i};}

\foreach \i in {21,23}
{\path (-18+\i,1.5) coordinate (\i);
\node[above] at (\i) {\i};
\draw (\i) -- (-16+\i,0.75);
\draw (\i) -- (-18+\i,0.75);
\draw (\i) -- (-20+\i,0.75);
}

\path (4,-2.25) coordinate (31);
\node[below] at (31) {31}; 
\draw (31)-- (15)
       (31) -- (13)
       (31) -- (11);
       
\foreach \i in {1,3,5}
\draw[dotted] (\i)-- (2,-0.75);

\foreach \i in {3,5,7}
\draw[dotted] (\i)-- (4,-0.75);

\foreach \i in {5,7}
\draw[dotted] (\i)-- (6,-0.75);

\end{tikzpicture}
\qquad \qquad \quad\begin{tikzpicture}[scale=0.6]

\foreach \i in {1,3,5,7}
{\path (\i,0) coordinate (\i);
\node[above] at (\i) {\i};}

\foreach \i in {13,15,17}
{\path (19-\i,-1.5) coordinate (\i);
\node[above] at (\i) {\i};}

\foreach \i in {23,25}
{\path (-20+\i,1.5) coordinate (\i);
\node[above] at (\i) {\i};
\draw (\i) -- (-18+\i,0.75);
\draw (\i) -- (-20+\i,0.75);
\draw (\i) -- (-22+\i,0.75);
}

\path (4,-2.25) coordinate (35);
\node[below] at (35) {35}; 
\draw (35)-- (15)
       (35) -- (13)
       (35) -- (17);
       
\foreach \i in {1,3,5}
\draw[dotted] (\i)-- (2,-0.75);

\foreach \i in {3,5,7}
\draw[dotted] (\i)-- (4,-0.75);

\foreach \i in {5,7}
\draw[dotted] (\i)-- (6,-0.75);

\end{tikzpicture}
\caption{The modified diagrams of $\tilde{P}_{\{8,9,10\}}$ and $\tilde{P}_{\{9,10,11\}}$}\label{fig:modified8}
\end{figure}
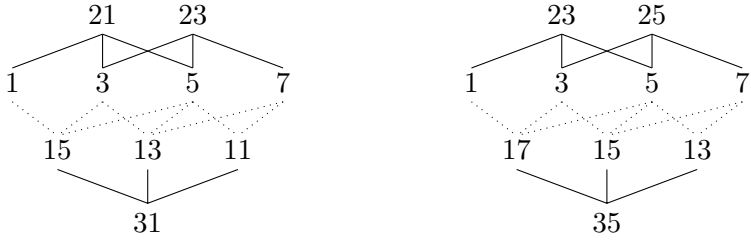

We note that
$$Q\cong P_{\{\lfloor \frac{s}{2}\rfloor+1,\lfloor \frac{s}{2}\rfloor+2,\lfloor \frac{s}{2}\rfloor+3 \}}\quad \text{and} \quad  R\cong P_{\{\lfloor \frac{s}{2}\rfloor,\lfloor \frac{s}{2}\rfloor+1,\lfloor \frac{s}{2}\rfloor+2 \}},$$
and therefore, $\tilde{P}_{\{2s,2s+1,2s+2\}}$ is equivalent to $\tilde{P}_{\{2s+1,2s+2,2s+3\}}$. Moreover, it is not hard to notice that the modified diagrams of $\tilde{P}_{\{2s,2s+1,2s+2\}}$ and $\tilde{P}_{\{2s+1,2s+2,2s+2\}}$ are also equivalent. Thus, we have the following proposition. 

\begin{prop}\label{prop:evenodd}
For a positive integer $s$, the number of self-conjugate $(2s,2s+1,2s+2)$-cores are equal to the number of self-conjugate $(2s+1,2s+2,2s+3)$-cores.
\end{prop}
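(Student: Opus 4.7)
The plan is to reduce the proposition, via Proposition~\ref{prop:lower}, to exhibiting an isomorphism of modified diagrams $\Phi\colon\tilde{P}_{\{2s,2s+1,2s+2\}}\to\tilde{P}_{\{2s+1,2s+2,2s+3\}}$ that respects both the Hasse structure and the dotted edges; such a $\Phi$ carries lower ideals to lower ideals, equating the two core-counts.

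For the underlying Hasse structure, I would apply the decomposition $\tilde{P}_{\{s,s+1,s+2\}}=Q\sqcup R$ recorded just above the proposition. Because $\lfloor 2s/2\rfloor=\lfloor (2s+1)/2\rfloor=s$, the stated identifications $Q\cong P_{\{s+1,s+2,s+3\}}$ and $R\cong P_{\{s,s+1,s+2\}}$ apply to both posets in question. I would then define $\Phi$ to act as the identity on the $Q$-minimal elements (which are $\{1,3,\dots,2s-1\}$ in both cases) and as the shift $h\mapsto h+2$ on the $R$-minimal elements (sending $\{2s+3,2s+5,\dots,4s-1\}$ onto $\{2s+5,2s+7,\dots,4s+1\}$), extending via the abstract isomorphisms of $Q$ and $R$ to any higher layers.

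The crucial step will be verifying that $\Phi$ sends dotted edges to dotted edges. The main observation is a size estimate: every non-minimal element of $\tilde{P}_{\{2s,2s+1,2s+2\}}$ is odd, lies in $P_{\{4s,\dots,4s+4\}}$, and is obtained from a minimal element by adding at least the smallest even generator $4s$, so it is at least $4s+5$ (the first odd integer beyond the generator window). Its sum with any other positive element therefore exceeds $4s+4$, ruling out dotted edges that touch it. Combined with the trivial inequalities that two $Q$-minimal elements sum to at most $4s-2<4s$ and two $R$-minimal elements to at least $4s+6>4s+4$, this shows the dotted edges are exactly the cross-pairs $(h_1,h_2)$ with $h_1\in Q$-min, $h_2\in R$-min, and $h_1+h_2\in\{4s,4s+2,4s+4\}$. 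Under $\Phi$ the sum shifts by $+2$, landing in $\{4s+2,4s+4,4s+6\}$, which is precisely the forbidden-sum set for $\tilde{P}_{\{2s+1,2s+2,2s+3\}}$; running the same size argument in the target poset confirms its dotted edges are of the same cross-type, so $\Phi$ gives the desired bijection. The only bookkeeping will be checking the boundary cases of small $s$, where $Q$-min or $R$-min can be almost empty but the statement becomes trivial.
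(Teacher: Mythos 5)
Your argument is correct and takes essentially the same route as the paper, whose ``proof'' is just the remark preceding the proposition: since $Q$ and $R$ depend only on $\lfloor s/2\rfloor$, the posets $\tilde{P}_{\{2s,2s+1,2s+2\}}$ and $\tilde{P}_{\{2s+1,2s+2,2s+3\}}$ are isomorphic, and the modified diagrams are asserted to be ``also equivalent.'' Your verification that the dotted edges join only cross-pairs of minimal elements of $Q$ and $R$, and that their forbidden sums shift from $\{4s,4s+2,4s+4\}$ to $\{4s+2,4s+4,4s+6\}$ under $h\mapsto h+2$ on $R$-min, supplies exactly the detail the paper leaves implicit.
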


\section{Counting self-conjugate simultaneous core partitions}\label{sec:counting}

In this section, we give a formula for the number of symmetric Motzkin paths, and then show that the number of self-conjugate $(2s,2s+1,2s+2)$-cores and the number of symmetric Motzkin paths of length $s$ satisfy the same recurrence relation.

\subsection{Counting symmetric Motzkin paths}

For a fixed $i$, there are $C_i \binom{n}{2i}$ Motzkin paths with exactly $i$ up steps since there are $C_i$ Dyck paths and there are $\binom{n}{2i}$ ways to insert $n-2i$ flat steps into a Dyck path with $i$ up steps. We say that a Motzkin path of length $n$ is \emph{symmetric} if its reflection about the line $x=\frac{n}{2}$ is itself. Let $S_n$ denote the number of symmetric Motzkin paths of length $n$. For exmaple, $S_0=1,S_1=1,S_2=2,S_3=2,S_4=5$. 

\begin{figure}[ht!]
\centering
\begin{tikzpicture}[scale=0.5]
\draw (0,0) -- (1,1) -- (2,0) -- (3,1) -- (4,0);
\draw[-|] (5+0,0) -- (5+1,1);
\draw[-|] (5+2,2) -- (5+3,1);
\draw (5+1,1) -- (5+2,2)
      (5+3,1) -- (5+4,0);
\draw (10+0,0) -- (10+1,0) -- (10+2,1) -- (10+3,0) -- (10+4,0);
\draw[-|] (15+0,0) -- (15+1,1) -- (15+2,1); 
\draw (15+2,1)-- (15+3,1) -- (15+4,0);
\draw[-|] (20+0,0)-- (20+1,0);
\draw[-|] (20+1,0)-- (20+2,0);
\draw[-|] (20+2,0)-- (20+3,0);
\draw (20+3,0)-- (20+4,0);
\end{tikzpicture}
\caption{Symmetric Motzkin paths of length $4$}
\end{figure}
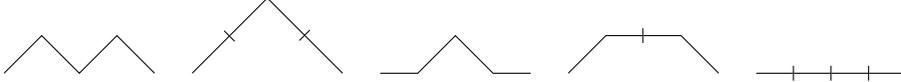

We note that the $(n+1)$st step of any symmetric Motzkin path of length $2n+1$ must be a flat step, and therefore, there is a natural bijection between symmetric Motzkin paths of length $2n+1$ and that of length $2n$ so that $S_{2n+1}=S_{2n}$.

Now, we count the number of symmetric Motzkin paths.

\begin{prop}\label{prop:symmetric} 
The number of symmetric Motzkin paths of length $n$ is
$$S_n=\sum_{i\geq0} \binom{\lfloor \frac{n}{2}\rfloor}{i}\binom{i}{\lfloor \frac{i}{2} \rfloor}.$$
\end{prop}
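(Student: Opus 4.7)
The plan is to reduce the count of symmetric Motzkin paths to a count of Motzkin prefixes, and then evaluate the latter by conditioning on the number of non-flat steps and invoking a classical ballot-type identity.

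First I would analyze what the symmetry constraint forces. Reflection about the line $x=n/2$ reverses the order of the steps and swaps $U\leftrightarrow D$ while fixing $F$; hence a Motzkin path $s_1s_2\cdots s_n$ is symmetric if and only if, for every $i$, the steps $s_i$ and $s_{n+1-i}$ are ``conjugate'' in the sense $U\leftrightarrow D$, $F\leftrightarrow F$. For odd $n=2m+1$ the central step $s_{m+1}$ is paired with itself and is therefore forced to be $F$ (recovering the observation already made in the paper). In both parities, a symmetric path is uniquely determined by its first $m:=\lfloor n/2\rfloor$ steps, and the only remaining constraint on those steps is that they form a path from $(0,0)$ of length $m$, with steps in $\{U,D,F\}$, that stays weakly above the $x$-axis. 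Call such a path a \emph{Motzkin prefix} of length $m$; then $S_n$ equals the number of Motzkin prefixes of length $m$.

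Next I would count Motzkin prefixes of length $m$ by conditioning on the number $i$ of non-flat steps. Choosing which of the $m$ positions hold the non-flat steps contributes a factor $\binom{m}{i}$, while the induced subsequence of $U$'s and $D$'s is an arbitrary $\pm 1$ lattice walk of length $i$ starting at $0$ and never dropping below $0$; this extraction is clearly a bijection because the flat steps do not affect the height. To finish, I would cite (or reprove in one line via the reflection principle) the classical identity that the number of non-negative $\pm 1$ walks of length $i$ from the origin equals the central binomial coefficient $\binom{i}{\lfloor i/2\rfloor}$. Summing over $i$ then gives exactly $\sum_{i\ge 0}\binom{\lfloor n/2\rfloor}{i}\binom{i}{\lfloor i/2\rfloor}$, matching the claimed formula.

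I do not anticipate a serious obstacle here. The only delicate points are verifying that the middle step must be flat in the odd case (immediate from the conjugacy $U\leftrightarrow D$, $F\leftrightarrow F$) and confirming that erasing flat steps really is a bijection between Motzkin prefixes with $i$ non-flat steps and non-negative $\pm 1$ walks of length $i$; everything else is bookkeeping.
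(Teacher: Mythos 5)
Your proof is correct and follows essentially the same decomposition as the paper's: both condition on the number $i$ of non-flat steps, obtaining the factor $\binom{\lfloor n/2\rfloor}{i}$ from placing the flat steps in the first half and $\binom{i}{\lfloor i/2\rfloor}$ from the up--down skeleton. The only cosmetic difference is that you count the skeletons as nonnegative $\pm 1$ walks of length $i$ (so your symmetric path is rebuilt from a Motzkin prefix), whereas the paper cites the count $\binom{i}{\lfloor i/2\rfloor}$ of symmetric Dyck paths with $i$ up steps from Remark \ref{rem:symmetric}; these are the same objects under the first-half bijection you already use.
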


\begin{proof}
It is enough to enumerate symmetric Motzkin paths of length $2n$. Suppose we are given a symmetric Dyck path with $i$ up steps, so that it has $2n-2i$ flat steps. To obtain a symmetric Motzkin path of length $2n$ with $i$ up steps, it is enough to consider inserting $n-i$ flat steps into the first half of the given symmetric Dyck path. Since there are $\binom{i}{\lfloor i/2\rfloor}$ symmetric Dyck paths with $i$ up steps as in Remark \ref{rem:symmetric}, and there are $\binom{n}{i}$ ways to insert flat steps, the number of symmetric Motzkin paths of length $2n$ with $i$ up steps is $\binom{n}{i}\binom{i}{\lfloor i/2 \rfloor}$. Therefore, $S_{2n}=S_{2n+1}=\sum_{i\geq0}\binom{n}{i}\binom{i}{\lfloor i/2 \rfloor}$. 
\end{proof} 

Now, we consider a recurrence relation of $S_{2n}$ involving $M_{n}$. For a symmetric Motzkin path $P=P_1P_2\cdots P_{2n}$ of length $2n$, where $P_i$ denotes the $i$th step, let $k\leq n$ be the largest number such that $P$ meets $x$-axis at $(k,0)$.
We note that if $k=n$, then both of $P_1P_2\cdots P_{n}$ and $P_{n+1}P_{n+2}\cdots P_{2n}$ are Motzkin paths of length $n$ which are symmetric to each other. 
On the other hand, if $k<n$, then $P_{k+1}=U$, $P_{2n-k}=D$, the subpath $P_{k+2}P_{k+3}\cdots P_{2n-k-1}$ is a symmetric Motzkin path of length $2n-2k-2$, and both of two subpaths $P_1P_2\cdots P_k$ and $P_{2n-k+1}P_{2n-k+2}\cdots P_{2n}$ are Motzkin paths of length $k$ which are symmetric to each other. Hence, we have a relation between $S_{2n}$ and $M_n$:
\begin{equation} \label{eqn:re}
S_{2n}=M_n+\sum_{k=0}^{n-1}S_{2n-2k-2}M_{k}.              
\end{equation}   

Equation (\ref{eqn:re}) and a closed formula for $S_{2n}$ can also be found in the OEIS as A005773 \cite{OEIS}.

\subsection{Counting self-conjugate $(2s,2s+1,2s+2)$-cores}  

The following lemma plays an important role to obtain a recurrence relation for the number of self-conjugate $(2s,2s+1,2s+2)$-cores. 

\begin{lem} \label{lem:with2s-1}
Let $s$ be a positive integer.
The number of self-conjugate $(2s,2s+1,2s+2)$-cores $\la$ with $2s-1\in MD(\la)$ is equal to the number of self-conjugate $(2s-2,2s-1,2s)$-cores.
\end{lem}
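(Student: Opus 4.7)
The plan is to apply Proposition \ref{prop:lower} to both sides of the claimed equality and then construct an explicit bijection between the resulting families of restricted lower ideals. By Proposition \ref{prop:lower}, self-conjugate $(2s,2s+1,2s+2)$-cores $\la$ with $2s-1\in MD(\la)$ correspond bijectively to lower ideals $I$ of $\tilde P_{\{2s,2s+1,2s+2\}}$ that contain $2s-1$ and in which no two elements sum to a value in $\{4s, 4s+2, 4s+4\}$, while self-conjugate $(2s-2,2s-1,2s)$-cores correspond to lower ideals $J$ of $\tilde P_{\{2s-2,2s-1,2s\}}$ whose pairwise sums avoid $\{4s-4, 4s-2, 4s\}$.

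First I would observe that $2s-1$ is the largest minimal element of the $Q$-component of $\tilde P_{\{2s,2s+1,2s+2\}}$. Among its dotted-edge partners, namely those $h$ with $(2s-1)+h \in \{4s, 4s+2, 4s+4\}$, the value $2s+1$ does not lie in $\tilde P$ (being a generator), so the only partners that matter are $2s+3$ and $2s+5$, which are the two smallest minimal elements of the $R$-component. Consequently, any valid $I$ containing $2s-1$ automatically excludes $2s+3$, $2s+5$, and all elements in their upward closure in $R$, leaving a reduced subposet in which the remaining choices must be made.

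Next, I would construct a bijection $\phi$ by first removing $2s-1$ from $I$ and then re-encoding the remaining elements. The surviving part of $I$ in $R$ maps, via the natural identification of the truncated $R$-subposet with (part of) the $Q'$-component of $\tilde P_{\{2s-2,2s-1,2s\}}$, into an ideal of $Q'$. The surviving part of $I$ in $Q$ is handled by case analysis: $Q$-elements not above $2s-1$ shift into the corresponding elements of $Q'$, while each $Q$-element $x$ that covers $2s-1$ along with some set $S$ of other $Q$-minima is swapped with a minimal element $y$ of $R'$ whose dotted-edge partners in $Q'$ are precisely the images of those forced minima. This swap trades a cover-forced inclusion on the left-hand side for a dotted-edge-forced exclusion on the right-hand side, and the counts of ``extra'' level-one elements in $Q$ match the ``extra'' minima of $R'$ as compared to the surviving minima of $R$.

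The main obstacle will be verifying in full generality that $\phi$ preserves both the lower-ideal property and the dotted-edge constraint. One must track how the sums $\{4s, 4s+2, 4s+4\}$ on the source transform into the sums $\{4s-4, 4s-2, 4s\}$ on the target under the various shifts and swaps, and confirm that every swap yields a valid ideal on the other side without creating forbidden pairs. The proof is completed by constructing an explicit inverse that reverses each swap and unshifts the $Q'$-encoded portion back into $R$, verifying that it lands in the restricted family on the left-hand side and yielding the claimed equality of counts.
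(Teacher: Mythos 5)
Your setup matches the paper's: translate both sides via Proposition \ref{prop:lower} into lower ideals with the sum restriction, observe that $2s-1\in I$ forces $2s+3$, $2s+5$ and their up-sets out of $I$, and look for a bijection onto the restricted ideals of $\tilde{P}_{\{2s-2,2s-1,2s\}}$. You also correctly sense that the two wings trade roles. But the map you describe is not the right one, and it breaks. The paper's bijection $\phi$ keeps every upper-level element in place and only rewrites the bottom row of $Q$: for each $h\in I$ with $4s+5\le h\le 6s-1$ it \emph{deletes} the three forced children $h-4s-4$, $h-4s-2$, $h-4s$; for each $h\in I$ with $2s+7\le h\le 4s-1$ it \emph{adds} the three forbidden partners $4s-h$, $4s-h+2$, $4s-h+4$; and it deletes $2s-1$. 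This converts every solid cover edge from the second level of $Q$ into a dotted exclusion edge and every dotted edge at a surviving $R$-minimum into a solid cover edge, which is exactly what identifies the resulting diagram with the modified diagram of $\tilde{P}_{\{2s-2,2s-1,2s\}}$. The exchange is wholesale: \emph{all} second-level elements of $Q$ become $R'$-type, and \emph{all} surviving $R$-minima become $Q'$-type.

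Your map instead swaps only the $Q$-elements covering $2s-1$ into $R'$ --- and there is exactly one such element at the second level, namely $6s-1$ --- while shifting the rest of $Q$ into $Q'$ and sending the surviving $R$-part into $Q'$ as well. This cannot be a bijection. Read literally, it is not surjective: for $s=4$ the target ideal $\{21,23\}$ of the second diagram (the paper's $J_{10}$, with both $R'$-type elements present) has no preimage, since your construction places at most one element into $R'$; moreover $21$ and the surviving $R$-minimum $15$ would compete for the single second-level slot of $Q'$. Read charitably (second-level $Q$ shifts to second-level $Q'$, surviving $R$-minima to the remaining $R'$-minima), it is not well defined: for $I\supseteq\{6s-1,\,6s-3,\,2s-7,\,2s-5,\,2s-3,\,2s-1\}$ your swap removes $2s-5$ and $2s-3$, yet the image of $6s-3$ is a $Q'$-element covering them, so the output is not a lower ideal; similarly $I=\{2s-1,\,4s-1\}$ maps to a set containing a $Q'$-element whose children were never added. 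These are precisely the verifications you defer in your last paragraph, and they fail. The missing idea is that the correction must be applied uniformly to every second-level element of $Q$ and every surviving $R$-minimum by toggling their bottom-row neighborhoods, not a local swap concentrated at $2s-1$.
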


\begin{proof} 
By Proposition \ref{prop:lower}, there is a bijection between $(2s,2s+1,2s+2)$-cores $\la$ with $2s-1\in MD(\la)$ and lower ideals $I$ of $\tilde{P}_{\{2s,2s+1,2s+2\}}$ containing $2s-1$ and no elements $h_1,h_2$ such that $h_1+h_2\in\{4s,4s+2,4s+4\}$. Thus, it is enough to consider lower ideals of the first diagram in Figure \ref{fig:modified2s}.

To prove the lemma, we construct a bijection $\phi$ between lower ideals $I$ of the first diagram and lower ideals $J$ of the second diagram in Figure \ref{fig:modified2s}, since the second diagram is equivalent to the modified diagram of $\tilde{P}_{\{2s-2,2s-1,2s\}}$.

\begin{figure}[ht!]
\centering
\begin{tikzpicture}[scale=0.65]

\foreach \i in {1,3,5,7,9,11}
{\path (\i,0) coordinate (\i);}
\node[above] at (1) {$1$};
\node[above] at (3) {$3$};
\node[above] at (5) {$\cdots$};
\node[above] at (7) {$\cdots$};
\node[above] at (9) {$2s-3$};
\node[above] at (11) {$\mathbf{2s-1}$};
 
\path (2,1.5) coordinate (2);
\node[above] at (2) {$4s+5$};
\draw (2) -- (1,0.7);
\draw (2) -- (3,0.7);
\draw (2) -- (5,0.7);

\path (4,1.5) coordinate (4);
\node[above] at (4) {$\cdots$};
\draw (4) -- (3,0.7);
\draw (4) -- (5,0.7);
\draw (4) -- (7,0.7);

\path (6,1.5) coordinate (6);
\node[above] at (6) {$\cdots$};
\draw (6) -- (5,0.7);
\draw (6) -- (7,0.7);
\draw (6) -- (9,0.7);

\path (8,1.5) coordinate (6);
\node[above] at (6) {$6s-1$};
\draw (6) -- (7,0.7);
\draw (6) -- (9,0.7);

\path (4,3) coordinate (8);
\node[above] at (8) {$\cdots$};
\draw (8) -- (2,2.2);
\draw (8) -- (4,2.2);
\draw (8) -- (6,2.2);

\path (6,3) coordinate (10);
\node[above] at (10) {$\cdots$};
\draw (10) -- (4,2.2);
\draw (10) -- (6,2.2);
\draw (10) -- (8,2.2);

\foreach \i in {13,15,17,19,21}
{\path (24-\i,-1.5) coordinate (\i);}

\node[above] at (21) {$4s-1$};
\node[above] at (19) {$\cdots$};
\node[above] at (17) {$2s+7$};
\node[above,gray!60] at (15) {$2s+5$};
\node[above,gray!60] at (13) {$2s+3$};

\path (5,-2.25) coordinate (31);
\node[below] at (31) {$\cdots$}; 
\draw (31)-- (21)
       (17) -- (31) -- (19);

\foreach \i in {1,3,5}
\draw[dotted] (\i)-- (3,-0.9);

\foreach \i in {3,5,7}
\draw[dotted] (\i)-- (5,-0.9);

\foreach \i in {5,7,9}
\draw[dotted] (\i)-- (7,-0.9);

\foreach \i in {11}
\draw[dotted] (\i)-- (9,-0.9);

\foreach \i in {11}
\draw[dotted] (\i)-- (11,-0.9);

\end{tikzpicture} 
\quad
\begin{tikzpicture}[scale=0.65]

\node[above] at (-0.4,0) {$\cong$};
\foreach \i in {1,3,5,7,9}
{\path (\i,0) coordinate (\i);}
\node[above] at (1) {$1$};
\node[above] at (3) {$3$};
\node[above] at (5) {$\cdots$};
\node[above] at (7) {$\cdots$};
\node[above] at (9) {$2s-3$};

\path (2,1.5) coordinate (2);
\node[above] at (2) {$4s+5$};
\draw[dotted] (2) -- (1,0.7);
\draw[dotted] (2) -- (3,0.7);
\draw[dotted] (2) -- (5,0.7);

\path (4,1.5) coordinate (4);
\node[above] at (4) {$\cdots$};
\draw[dotted] (4) -- (3,0.7);
\draw[dotted] (4) -- (5,0.7);
\draw[dotted] (4) -- (7,0.7);

\path (6,1.5) coordinate (6);
\node[above] at (6) {$\cdots$};
\draw[dotted] (6) -- (5,0.7);
\draw[dotted] (6) -- (7,0.7);
\draw[dotted] (6) -- (9,0.7);

\path (8,1.5) coordinate (6);
\node[above] at (6) {$6s-1$};
\draw[dotted] (6) -- (7,0.7);
\draw[dotted] (6) -- (9,0.7);

\path (4,3) coordinate (8);
\node[above] at (8) {$\cdots$};
\draw (8) -- (2,2.2);
\draw (8) -- (4,2.2);
\draw (8) -- (6,2.2);

\path (6,3) coordinate (10);
\node[above] at (10) {$\cdots$};
\draw (10) -- (4,2.2);
\draw (10) -- (6,2.2);
\draw (10) -- (8,2.2);

\foreach \i in {17,19,21}
{\path (24-\i,-1.5) coordinate (\i);}

\node[above] at (21) {$4s-13$};
\node[above] at (19) {$\cdots$};
\node[above] at (17) {$2s+7$};

\path (5,-2.25) coordinate (31);
\node[below] at (31) {$\cdots$}; 
\draw (31)-- (21)
       (17) -- (31) -- (19);

\foreach \i in {1,3,5}
\draw (\i)-- (3,-0.9);

\foreach \i in {3,5,7}
\draw (\i)-- (5,-0.9);

\foreach \i in {5,7,9}
\draw (\i)-- (7,-0.9);
\end{tikzpicture}

\caption{The modified diagram of $\tilde{P}_{\{2s,2s+1,2s+2\}}$ for ideals having  $2s-1$}\label{fig:modified2s}
\end{figure}
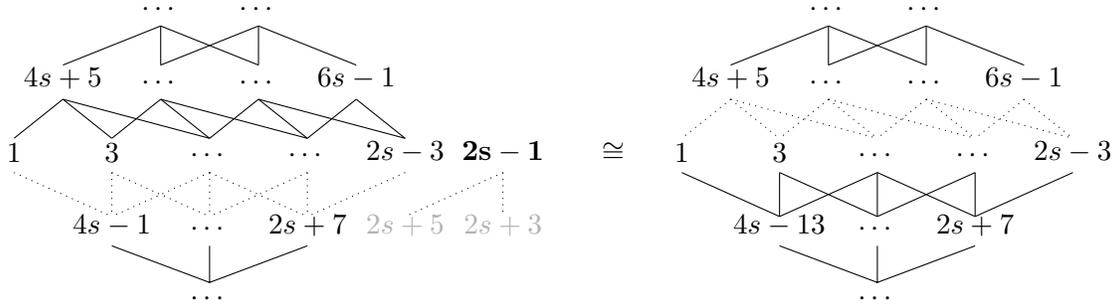

If $I$ is a lower ideal of the first diagram, then $I$ satisfies the following: 
\begin{itemize}
\item $I$ contains $2s-1$.
\item For $4s+5\leq h\leq 6s-1$, $h\in I$ implies $h-4s-4, h-4s-2, h-4s\in I$.
\item For $2s+7\leq h\leq 4s-1$, $h\in I$ implies $4s-h, 4s-h+2, 4s-h+4\not\in I$.
\end{itemize}
 
Now, we construct a corresponding set $\phi(I)$ from $I$ as follows.
 
\begin{itemize}
\item For each $h\in I$ with $4s+5\leq h\leq 6s-1$, delete $h-4s-4, h-4s-2, h-4s$ from $I$.
\item For each $h\in I$ with $2s+7\leq h\leq 4s-1$, add $4s-h, 4s-h+2, 4s-h+4$ to $I$.
\item Delete $2s-1$ from the set.
\end{itemize}

Then $\phi(I)$ is a lower ideal of the poset structure defined by the second diagram in Figure \ref{fig:modified2s} and it is easy to check that $\phi$ is a bijection.
\end{proof}

\begin{example} For self-conjugate $(8,9,10)$-cores $\la$ such that $7\in MD(\la)$, let $I_1, I_2,\dots,I_{13}$ be their corresponding lower ideals of $\tilde{P}_{\{8,9,10\}}$.  Now, we list $I_i$ and $J_i=\phi(I_i)$ for $i=1,2,\dots,13$, where $\phi$ is the bijection defined in the proof of Lemma \ref{lem:with2s-1}.\\

$\begin{array}{lr}
I_1=\{7\}\\
I_3=\{3,7\}\\
I_5=\{1,3,7\}\\
I_7=\{3,5,7\}\\
I_9=\{1,3,5,7,21\}\\
I_{11}=\{3,5,7,23\} \\
I_{13}=\{7,15\} 
\end{array}
$
$\begin{array}{lr}
J_1=\emptyset\\
J_3=\{3\}\\
J_5=\{1,3\} \\
J_7=\{3,5\}\\
J_9=\{21\}\\
J_{11}=\{23\} \\
J_{13}=\{1,3,5,15\} 
\end{array}
$
\quad\qquad
$\begin{array}{lr}
I_2=\{1,7\}\\
I_4=\{5,7\}\\
I_6=\{1,5,7\}\\
I_8=\{1,3,5,7\}\\
I_{10}=\{1,3,5,7,21,13\}\\
I_{12}=\{1,3,5,7,23\}
\end{array}
$
$\begin{array}{lr}
J_2=\{1\}\\
J_4=\{5\}\\
J_6=\{1,5\}\\
J_8=\{1,3,5\}\\
J_{10}=\{21,23\}\\
J_{12}=\{1,23\}
\end{array}
$\\

We note that for each $i$, $I_i$ is an ideal of the first diagram and $J_i$ is an ideal of the second diagram of the following figure.
 
\begin{figure}[ht!]
\centering
\begin{tikzpicture}[scale=0.65]

\foreach \i in {5,7,9,11}
{\path (\i,0) coordinate (\i);}
\node[above] at (5) {$1$};
\node[above] at (7) {$3$};
\node[above] at (9) {$5$};
\node[above] at (11) {$\mathbf{7}$};

\path (6,1.5) coordinate (6);
\node[above] at (6) {$21$};
\draw (6) -- (5,0.7);
\draw (6) -- (7,0.7);
\draw (6) -- (9,0.7);

\path (8,1.5) coordinate (6);
\node[above] at (6) {$23$};
\draw (6) -- (7,0.7);
\draw (6) -- (9,0.7);

\foreach \i in {13,15,17}
{\path (24-\i,-1.5) coordinate (\i);}

\node[above] at (17) {$15$};
\node[above,gray!60] at (15) {$13$};
\node[above,gray!60] at (13) {$11$};

\foreach \i in {5,7,9}
\draw[dotted] (\i)-- (7,-0.8);

\foreach \i in {11}
\draw[dotted] (\i)-- (9,-0.8);

\foreach \i in {11}
\draw[dotted] (\i)-- (11,-0.8);

\end{tikzpicture} 
\quad
\begin{tikzpicture}[scale=0.65]

\node[above] at (3.6,0) {$\cong$};
\foreach \i in {5,7,9}
{\path (\i,0) coordinate (\i);}
\node[above] at (5) {$1$};
\node[above] at (7) {$3$};
\node[above] at (9) {$5$};

\path (6,1.5) coordinate (6);
\node[above] at (6) {$21$};
\draw[dotted] (6) -- (5,0.7);
\draw[dotted] (6) -- (7,0.7);
\draw[dotted] (6) -- (9,0.7);

\path (8,1.5) coordinate (6);
\node[above] at (6) {$23$};
\draw[dotted] (6) -- (7,0.7);
\draw[dotted] (6) -- (9,0.7);

\foreach \i in {17}
{\path (24-\i,-1.5) coordinate (\i);}
\node[above] at (17) {$15$};

\foreach \i in {5,7,9}
\draw (\i)-- (7,-0.8);
\end{tikzpicture}
\caption{The modified diagram of $\tilde{P}_{\{8,9,10\}}$ for ideals $I$ with $7\in I$}\label{fig:exam}
\end{figure}
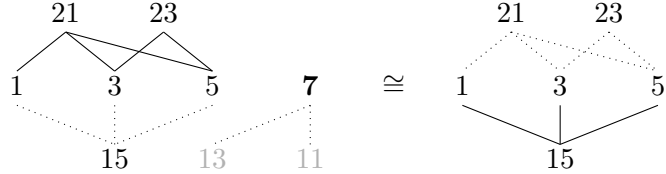

\end{example}

The following proposition is a generalization of Lemma \ref{lem:with2s-1}.

\begin{prop} \label{prop:no2s-1}
Let $s$ and $k$ be positive integers such that $k\leq s$.
\begin{enumerate}
\item[(a)] The number of self-conjugate $(2s,2s+1,2s+2)$-cores $\la$ satisfies that 
$$2k-1\in MD(\la) \quad \text{and} \quad 2k+1,2k+3,\dots, 2s-1\not\in MD(\la)$$ 
is the number of self-conjugate $(2k-2,2k-1,2k)$-cores multiplied by $M_{s-k}$.
\item[(b)] The number of self-conjugate $(2s,2s+1,2s+2)$-cores $\la$ with 
$1,3,\dots, 2s-1\not\in MD(\la)$ is $M_{s}$.
\end{enumerate}
\end{prop}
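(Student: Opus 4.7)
The plan is to apply Proposition 2.2 and count lower ideals $I$ of $\tilde{P}_{\{2s,2s+1,2s+2\}}$ (with the dotted-edge constraint) that contain $2k-1$ but none of $2k+1,\dots,2s-1$. Since $2k+1,\dots,2s-1$ are minimal elements of $Q$, the lower-ideal property forces $I\cap Q$ to lie inside the induced subposet $Q_{\le 2k-1}$ whose bottom row is $\{1,3,\dots,2k-1\}$; a row-by-row count shows $Q_{\le 2k-1}$ has the same triangular shape as the $Q$-component of $\tilde{P}_{\{2k,2k+1,2k+2\}}$, so they are poset-isomorphic.

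The dotted edges from $2k-1$ force $\{4s-2k+1,\,4s-2k+3,\,4s-2k+5\}$ out of $I\cap R$; since these are three consecutive elements of $R$'s bottom row, any higher-row element of $R$ whose support meets them is also excluded. Hence $I\cap R$ splits into two independent pieces: call ``Part A'' the induced subposet of $R$ supported entirely in $\{2s+3,\dots,4s-2k-1\}$, and ``Part B'' the induced subposet supported entirely in $\{4s-2k+7,\dots,4s-1\}$. A row count shows that Part A is poset-isomorphic to $P_{\{s-k,s-k+1,s-k+2\}}$, and that Part B, after the shift $h\mapsto h-4(s-k)$, is poset-isomorphic to the subposet of the $R$-component of $\tilde{P}_{\{2k,2k+1,2k+2\}}$ that survives once the dotted edges from $2k-1$ remove $\{2k+3,2k+5\}$.

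The key independence is that no bottom-row element of $Q_{\le 2k-1}$ is dotted to any element of Part A: any such sum is at most $(2k-1)+(4s-2k-1)=4s-2<4s$. So $I\cap\text{Part A}$ ranges freely over the $M_{s-k}$ lower ideals of $P_{\{s-k,s-k+1,s-k+2\}}$ by Corollary 1.5. Meanwhile the pair $\bigl(I\cap Q_{\le 2k-1},\,I\cap\text{Part B}\bigr)$ corresponds, under the two isomorphisms above, to a lower ideal of $\tilde{P}_{\{2k,2k+1,2k+2\}}$ containing $2k-1$: one checks that each dotted edge of the big poset with both endpoints in $Q_{\le 2k-1}\cup\text{Part B}$, necessarily summing to $\{4s,4s+2,4s+4\}$, matches under the shift a dotted edge of the small poset summing to $\{4k,4k+2,4k+4\}$, and vice versa. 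Lemma 3.2 then counts these as the number of self-conjugate $(2k-2,2k-1,2k)$-cores, and multiplying the two independent counts yields (a). For (b), the hypothesis forces $I\cap Q=\emptyset$ (every minimal element of $Q$ is excluded, which kills everything above them too), so $I$ is an unconstrained lower ideal of $R\cong P_{\{s,s+1,s+2\}}$, and Corollary 1.5 gives $M_s$.

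The main obstacle will be the bookkeeping in the second step: confirming that Part A, Part B, and the three always-forbidden bottom elements of $R$ (together with their upward closures in $R$) exhaust $R$, and verifying that the dotted-edge structure between $Q_{\le 2k-1}$ and Part B transfers cleanly onto the dotted-edge structure of $\tilde{P}_{\{2k,2k+1,2k+2\}}$ under the shift $h\mapsto h-4(s-k)$.
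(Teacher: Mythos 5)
Your proposal is correct and takes essentially the same route as the paper: split the modified diagram at $2k-1$, identify the part lying over $\{1,3,\dots,2k-1\}$ together with the surviving right end of $R$ (your Part B) with the modified diagram of $\tilde{P}_{\{2k-2,2k-1,2k\}}$ via the bijection of Lemma \ref{lem:with2s-1}, and observe that the remaining component of $R$ (your Part A) is an unconstrained triangle contributing $M_{s-k}$ independently. Your bookkeeping is in fact more explicit than the paper's, which simply reads the decomposition off Figure \ref{fig:modified2s2}; in particular your identification of Part A as $P_{\{s-k,s-k+1,s-k+2\}}$ and your verification that no dotted edge joins $\{1,\dots,2k-1\}$ to Part A are the details the paper leaves implicit.
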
 
\begin{proof}
We consider the modified diagram of $\tilde{P}_{\{2s,2s+1,2s+2\}}$ with restrictions in Figure \ref{fig:modified2s2}.

\begin{figure}[ht!]
\centering
\begin{tikzpicture}[scale=0.72]

\foreach \i in {1,3,5,7,9,11,13,15,17,19,21}
{\path (\i,0) coordinate (\i);}
\node[above] at (1) {\small$1$};
\node[above] at (3) {\small$3$};
\node[above] at (5) {\small$\cdots$};
\node[above] at (7) {\small$\cdots$};
\node[above] at (9) {\small$2k-3$};
\node[above] at (11) {\small$\mathbf{2k-1}$};
\node[above,gray!60] at (13) {\small$2k+1$};
\node[above,gray!60] at (15) {\small$2k+3$};
\node[above,gray!60] at (17) {\small$\cdots$};
\node[above,gray!60] at (19) {\small$2s-3$};
\node[above,gray!60] at (21) {\small$2s-1$};
  
\path (2,1.5) coordinate (2);
\node[above] at (2) {\small$4s+5$};
\draw (2) -- (1,0.7);
\draw (2) -- (3,0.7);
\draw (2) -- (5,0.7);

\path (4,1.5) coordinate (4);
\node[above] at (4) {\small$\cdots$};
\draw (4) -- (3,0.7);
\draw (4) -- (5,0.7);
\draw (4) -- (7,0.7);

\path (6,1.5) coordinate (6);
\node[above] at (6) {\small$\cdots$};
\draw (6) -- (5,0.7);
\draw (6) -- (7,0.7);
\draw (6) -- (9,0.7);

\path (8,1.5) coordinate (6);
\node[above] at (6) {\small$4s+2k-1$};
\draw (6) -- (7,0.7);
\draw (6) -- (9,0.7);

\path (4,3) coordinate (8);
\node[above] at (8) {\small$\cdots$};
\draw (8) -- (2,2.2);
\draw (8) -- (4,2.2);
\draw (8) -- (6,2.2);

\path (6,3) coordinate (10);
\node[above] at (10) {\small$\cdots$};
\draw (10) -- (4,2.2);
\draw (10) -- (6,2.2);
\draw (10) -- (8,2.2);

\foreach \i in {111,113,115,117,119,121,123,125,127}
{\path (130-\i,-1.5) coordinate (\i);}

\node[above] at (127) {\small$4s-1$};
\node[above] at (125) {\small$\cdots$};
\node[above] at (123) {\small$4s-2k+7$};
\node[above] at (115) {\small$4s-2k-1$};
\node[above] at (113) {\small$\cdots$};
\node[above] at (111) {\small$2s+3$};

\path (5,-2.25) coordinate (31);
\node[below] at (31) {\small$\cdots$}; 
\draw (31)-- (127)
       (123) -- (31) -- (125);

\path (17,-2.25) coordinate (33);
\node[below] at (33) {\small$\cdots$}; 
\draw (33)-- (111)
       (113) -- (33) -- (115);       

\foreach \i in {1,3,5}
\draw[dotted] (\i)-- (3,-0.9);

\foreach \i in {3,5,7}
\draw[dotted] (\i)-- (5,-0.9);

\foreach \i in {5,7,9}
\draw[dotted] (\i)-- (7,-0.9);

\foreach \i in {9,11,13}
\draw[dotted] (11)-- (\i,-0.9);

\foreach \i in {13,15,17}
\draw[dotted] (\i)-- (15,-0.9);

\foreach \i in {15,17,19}
\draw[dotted] (\i)-- (17,-0.9);

\foreach \i in {17,19,21}
\draw[dotted] (\i)-- (19,-0.9);
\end{tikzpicture}

\caption{The modified diagram of $\tilde{P}_{\{2s,2s+1,2s+2\}}$ with restrictions}\label{fig:modified2s2}
\end{figure}
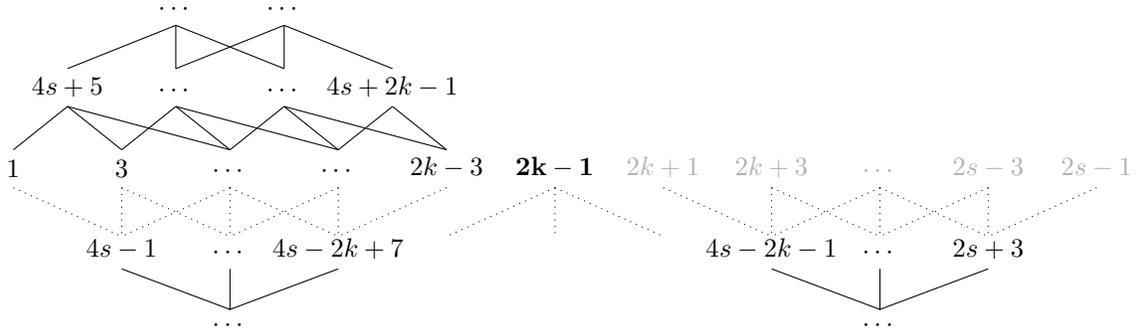

\begin{enumerate}
\item[(a)] On the modified diagram with restrictions, the left-hand side of $2k-1$ is equivalent to the modified diagram of $\tilde{P}_{\{2k-2,2k-1,2k\}}$ as we showed in Lemma \ref{lem:with2s-1}, and the right-hand side of $2k-1$ is equivalent to the Hasse diagram of the poset $P_{\{2s-2k-2,2s-2k-1,2s-2k\}}$. Hence, by Corollary \ref{cor:Motzkin}, the number of lower ideals $I$ satisfying $2k-1\in I$ and $2k+1,2k+3,\dots,2s-1\not \in I$ is $M_{s-k}$ times the number of self-conjugate $(2k-2,2k-1,2k)$-cores.
\item[(b)] If $1,3,\dots, 2s-1\not\in I$, then $I$ is a lower ideal of a subposet of $\tilde{P}_{\{2s,2s+1,2s+2\}}$ which is equivalent to $P_{\{s,s+1,s+2\}}$. Hence, by Corollary \ref{cor:Motzkin}, the number of lower ideals $I$ with $1,3,\dots,2s-1\not \in I$ is $M_{s}$. 
\end{enumerate}
\end{proof}

Now, we are ready to prove our main theorem.

\begin{thm}
For a positive integer $s$, the number of self-conjugate $(s,s+1,s+2)$-cores is
$$\sum_{i\geq0}\binom{\lfloor \frac{s}{2}\rfloor}{i}\binom{i}{\lfloor \frac{i}{2} \rfloor},$$
which counts the number of symmetric Motzkin paths of length $s$.
\end{thm}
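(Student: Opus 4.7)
The plan is to reduce the proof of the theorem to matching two recurrences---one for the self-conjugate core count and one for $S_n$---and then to invoke the closed form for $S_n$ already established in Proposition~\ref{prop:symmetric}. By Proposition~\ref{prop:evenodd} and the identity $S_{2s+1}=S_{2s}$ noted just before Proposition~\ref{prop:symmetric}, it suffices to treat the even case. Write $N_s$ for the number of self-conjugate $(2s,2s+1,2s+2)$-cores, with the convention $N_0=1$ for the empty partition.

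First I would obtain a recurrence for $N_s$ by partitioning such cores $\la$ according to the largest odd integer of $MD(\la)$ not exceeding $2s-1$. If no such integer exists, Proposition~\ref{prop:no2s-1}(b) counts $M_s$ possibilities. Otherwise, when this largest odd value is $2k-1$ (so that $2k+1,2k+3,\ldots,2s-1\notin MD(\la)$), Proposition~\ref{prop:no2s-1}(a) contributes $N_{k-1}\,M_{s-k}$ possibilities. Summing over the two cases gives
\[
N_s \;=\; M_s \;+\; \sum_{k=1}^{s} N_{k-1}\,M_{s-k}.
\]

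Next I would reindex this recurrence by putting $j=k-1$ to obtain $N_s = M_s + \sum_{j=0}^{s-1} N_j\,M_{s-j-1}$, and compare with equation~(\ref{eqn:re}) after its own substitution $j=n-k-1$, which yields $S_{2n} = M_n + \sum_{j=0}^{n-1} S_{2j}\,M_{n-j-1}$. Both sequences satisfy the same recurrence with the same initial value $a_0=1$, so induction on $s$ forces $N_s = S_{2s}$; the closed form of Proposition~\ref{prop:symmetric} then delivers the claimed formula in the even case, and the odd case is immediate from $S_{2s+1}=S_{2s}$ together with Proposition~\ref{prop:evenodd}.

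The main obstacle has essentially been cleared by Proposition~\ref{prop:no2s-1}: the key structural fact is that fixing the largest used element of the bottom antichain in the modified diagram of $\tilde{P}_{\{2s,2s+1,2s+2\}}$ decouples the diagram into a left part equivalent to the modified diagram of a smaller $\tilde{P}_{\{2k-2,2k-1,2k\}}$ (and hence counted by $N_{k-1}$) and an independent right part equivalent to the Hasse diagram of $P_{\{s-k,s-k+1,s-k+2\}}$ (whose ideals are enumerated by $M_{s-k}$ via Corollary~\ref{cor:Motzkin}). Once that decoupling is in hand, the remaining steps are pure bookkeeping: summing the pieces and aligning indices with equation~(\ref{eqn:re}).
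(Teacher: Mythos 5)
Your proposal is correct and follows essentially the same route as the paper: the authors likewise partition the self-conjugate $(2s,2s+1,2s+2)$-cores by the largest element of $MD(\lambda)$ among $1,3,\dots,2s-1$, apply Proposition \ref{prop:no2s-1} to obtain the recurrence $a_{2s}=M_s+\sum_{k=1}^{s}a_{2k-2}M_{s-k}$, and identify it with equation (\ref{eqn:re}) using the common initial value $a_0=S_0=1$, handling the odd case via Proposition \ref{prop:evenodd}. The only difference is notational (your $N_{k-1}$ is the paper's $a_{2k-2}$).
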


\begin{proof}
 Let $a_s$ denote the number of self-conjugate $(s,s+1,s+2)$-cores. From Proposition \ref{prop:evenodd}, we have $a_{2s+1}=a_{2s}$. Hence, it is enough to show that $a_{2s}=S_{2s}$. 
 
For $1\leq k \leq s$, let $A_k$ be the set of self-conjugate $(2s,2s+1,2s+2)$-cores $\la$ that satisfies
$$2k-1\in MD(\la) \quad \text{and} \quad 2k+1,2k+3,\dots, 2s-1\not\in MD(\la),$$ and let $A_0$ be the set of self-conjugate $(2s,2s+1,2s+2)$-cores $\la$ with $2i-1\not \in MD(\la)$ for $1\leq i\leq s$. Then, $A_0\cup A_1\cup \cdots \cup A_s$ is the set of self-conjugate $(2s,2s+1,2s+1)$-cores and 
$$a_{2s}=|A_0|+|A_1|+\cdots+|A_s|.$$
From Proposition \ref{prop:no2s-1}, we have $|A_0|=M_s$ and $|A_k|=a_{2k-2}M_{s-k}$ for $1\leq k \leq s$, and therefore, 
$$a_{2s}=M_s+\sum_{k=1}^{s}a_{2k-2}M_{s-k}=M_s+\sum_{k=0}^{s-1}a_{2s-2k-2}M_{k}.$$
Since the relation between $a_{2s}$ and $M_{s}$ is equivalent to (\ref{eqn:re}) and
$a_0=S_0=1$, we have come to a conclusion that $a_{2s}=S_{2s}=\sum\binom{s}{i}\binom{i}{\lfloor i/2 \rfloor}$ by Proposition \ref{prop:symmetric}.
\end{proof}

Encouraged by this success, we offer the following generalized conjecture.

\begin{conj} For given positive integers $s$ and $k$, the number of self-conjugate $(s,s+1,\dots,s+k)$-cores is equal to the number of symmetric $(s,k)$-generalized Dyck paths.
\end{conj}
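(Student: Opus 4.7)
The plan is to extend the poset-theoretic framework of Section~\ref{sec:poset} to arbitrary $k$ and then prove the conjecture by matching a single recurrence on both sides. First, by applying Proposition~\ref{prop:FMS} simultaneously to each of $t=s,s+1,\dots,s+k$, I would show that a self-conjugate partition $\la$ is an $(s,s+1,\dots,s+k)$-core if and only if $MD(\la)$ is a lower ideal of the induced subposet
\[
\tilde{P}_{\{s,s+1,\dots,s+k\}}=\{h\in P_{\{2s,2s+1,\dots,2s+2k\}}: h\text{ is odd and } s+i\not<_P h \text{ for } 0\le i\le k\}
\]
containing no two elements $h_1,h_2$ with $h_1+h_2\in\{2s,2s+2,\dots,2s+2k\}$. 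Decorating the Hasse diagram with dotted edges for each such forbidden pair produces a modified diagram generalizing Figure~\ref{fig:modified8}, whose ``dotted-independent'' lower ideals are exactly in bijection with the self-conjugate $(s,s+1,\dots,s+k)$-cores.

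Next I would establish parallel recurrences on the two sides. On the path side, I would decompose each symmetric $(s,k)$-generalized Dyck path according to the last lattice point on the main diagonal visited before the midpoint: the initial segment is a $(j,k)$-generalized Dyck path, the final segment is its mirror across $y=s-x$, and the remaining central piece is a shorter symmetric $(s,k)$-generalized Dyck path whose first and last steps straddle the anti-diagonal. Tracking the options at the center (a pair $(N_k,E_k)$ versus a single $D_i$ with $1\le i\le k-1$ spanning the anti-diagonal) yields a convolution of schematic shape
\[
\mathrm{SD}^{(k)}_s \;=\; \sum_{j,\,i}\mathrm{SD}^{(k)}_{s-2j-\delta_i}\cdot M^{(k)}_{j},
\]
where $M^{(k)}_j$ denotes the number of $(j,j+1,\dots,j+k)$-cores (equivalently, by Theorem~\ref{thm:AL}, the number of $(j,k)$-generalized Dyck paths) and $\delta_i$ is a small center-size correction. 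On the core side, I would generalize Lemma~\ref{lem:with2s-1} and Proposition~\ref{prop:no2s-1}: classify self-conjugate $(2s,\dots,2s+2k)$-cores by the largest odd $2j-1\in MD(\la)$, then split the modified diagram into a ``lower half'' below $2j-1$ and an ``upper half'' above. The lower half should be isomorphic, as a decorated poset, to the modified diagram for $(2j-2,\dots,2j-2+2k)$-cores after a folding bijection that trades each dotted edge for a solid covering relation, while the upper half should be isomorphic to the Hasse diagram of $P_{\{s-j,s-j+1,\dots,s-j+k\}}$ and hence contribute $M^{(k)}_{s-j}$ by Theorem~\ref{thm:AL}. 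Summing over $j$, combined with an analog of Proposition~\ref{prop:evenodd} relating the values for consecutive parities of $s$, would give the same recurrence as on the path side; induction on $s$ with base case $s=0$ then closes the argument.

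The hard part is the generalization of Lemma~\ref{lem:with2s-1}, together with matching the center correction $\delta_i$ in the Dyck-path recurrence. For $k=2$, each non-maximal odd element below $2j-1$ has exactly three dotted partners arranged in a small triangle, and the folding map cleanly converts them into three solid covers; the identification $R\cong P_{\{\lfloor s/2\rfloor,\dots\}}$ in Section~\ref{sec:poset} rests on this. For general $k$, each element participates in $k+1$ overlapping dotted edges and the upper-cover structure becomes a $(k+1)$-wide strip rather than a triangle, so showing that the folding map still induces a poset isomorphism will likely require a more careful local analysis, perhaps by realizing it as the restriction of a global involution on $P_{\{2s,\dots,2s+2k\}}$ that exchanges $h$ with $2s+2k-h$ modulo the ideal structure. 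A secondary subtlety is that the clean identity $a_{2s+1}=a_{2s}$ of Proposition~\ref{prop:evenodd} no longer holds in the same form once $k\ge 3$, since the two modified diagrams differ in their boundary elements depending on $s\bmod k$ and on how many of $s,s+1,\dots,s+k$ are even. One will likely need a recurrence refined by residue class, matched by a corresponding refinement of the symmetric path decomposition according to the residue class of the center crossing. Getting these refined recurrences to coincide across all residues of $s$ modulo $k$ is where I expect the bulk of the combinatorial effort to lie.
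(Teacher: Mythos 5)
This statement is offered in the paper as a conjecture; the authors give no proof of it, so there is no argument of theirs to compare yours against. The only question is whether your outline actually closes the problem, and it does not: it is a plausible program whose decisive steps are left open, as you yourself acknowledge.

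Concretely, two gaps are fatal as written. First, the generalization of Lemma \ref{lem:with2s-1} is the entire content of the core-side argument, and you do not construct it. For $k=2$ the folding bijection works because each odd element below $2s-1$ sits in a small triangle of dotted partners and because of the specific identifications $Q\cong P_{\{\lfloor s/2\rfloor+1,\lfloor s/2\rfloor+2,\lfloor s/2\rfloor+3\}}$ and $R\cong P_{\{\lfloor s/2\rfloor,\lfloor s/2\rfloor+1,\lfloor s/2\rfloor+2\}}$; for general $k$ the forbidden-sum set $\{2s,2s+2,\dots,2s+2k\}$ produces a $(k+1)$-wide strip of overlapping dotted edges, the parities of $s,\dots,s+k$ mix, and you give no proof that the proposed folding is a bijection onto the lower ideals of the smaller modified diagram --- you only say it ``will likely require a more careful local analysis.'' Second, the path-side recurrence is schematic: the correction $\delta_i$ is never defined, and for $k\ge 3$ a symmetric $(s,k)$-generalized Dyck path can straddle the anti-diagonal in several inequivalent ways (a single $D_i$ step can even lie along the main diagonal, so ``last return to the diagonal'' is not a clean lattice-point notion as it is for Motzkin paths). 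Until both recurrences are written out explicitly, refined by the residue of $s$ as you indicate, and shown to agree term by term together with matching base cases (noting that the analog of Proposition \ref{prop:evenodd} fails for $k\ge 3$), the induction has no content. The proposal is a reasonable research plan, but the conjecture remains unproved by it.
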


\bibliographystyle{plain} 
\bibliography{mybib}

\end{document}